\newtheorem{thm}{Theorem}
\newtheorem{cor}{Corollary}
\newtheorem{lemma}{Lemma}
\newcommand{\mat}[4]{\mbox{\small 
$\left[\!\addtolength{\arraycolsep}{-1pt}\begin{array}{cc} #1&#2\\[-1pt]
#3&#4\end{array}\!\right]$}}
\newcommand{\vect}[2]{\mbox{\small 
$\left[\!\addtolength{\arraycolsep}{-1pt}\begin{array}{cc} #1\\[-1pt]
#2\end{array}\!\right]$}}
\begin{document}\raggedbottom
\title[Modular forms]{\mbox{ }\\[-40pt] Modular forms, projective structures, 
and the four squares theorem}
\author[M. Eastwood]{Michael Eastwood}
\address{\hskip-\parindent
School of Mathematical Sciences\\
University of Adelaide\\ 
SA 5005\\ 
Australia}
\email{meastwoo@gmail.com}
\author[B. Moore]{Ben Moore}
\address{\hskip-\parindent
Mathematics Institute\\
Zeeman Building\\
University of Warwick\newline
Coventry CV4 7AL\\
England}
\email{benmoore196884@gmail.com}
\subjclass{11F03, 11F27, 53A20}
\begin{abstract} 
It is well-known that Lagrange's four-square theorem, stating that every
natural number may be written as the sum of four squares, may be proved using
methods from the classical\linebreak theory of modular forms and theta
functions. We revisit this proof. In doing so, we concentrate on geometry and
thereby avoid some of the tricky analysis that is often encountered. Guided by
projective differential geometry we find a new route to Lagrange's theorem.
\end{abstract}
\renewcommand{\subjclassname}{\textup{2010} Mathematics Subject Classification}
\maketitle
\begin{center}
\begin{picture}(340,340)
\put(170,170){\makebox(0,0){
\includegraphics[width=\linewidth]{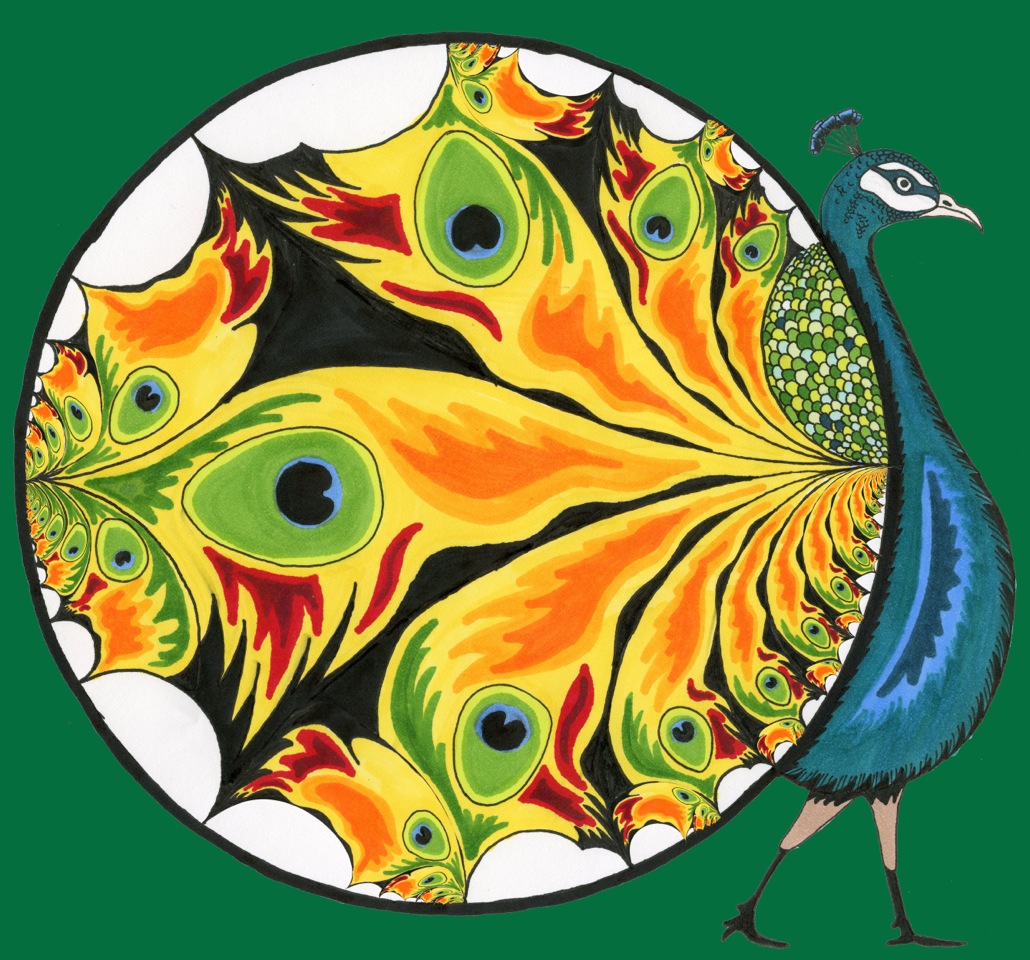}}}
\end{picture}\end{center}
\begin{center}
An artist's impression of the action of $\Gamma_0(4)$ on the unit 
disc.\\
In the Polish wycinanka {\l}owicka style by Katarzyna Nurowska.
\end{center}
\section{Introduction}
In 1770, Lagrange proved 
that every natural number can be 
written as the sum of four squares. In 1834, Jacobi gave a formula for the 
number of different ways that this can be done. More precisely, if we consider 
the formal power series
\begin{equation}\label{theta}\theta(q)\equiv
\sum_{n\in{\mathbb{Z}}}q^{n^2}=1+2(q+q^4+q^9+q^{16}+q^{25}+\cdots),
\end{equation}
then Lagrange's Theorem says that all coefficients of 
$$(\theta(q))^4=1+8(q+3q^2+4q^3+3q^4+6q^5+12q^6+8q^7+3q^8+13q^9+\cdots)$$
are positive  whilst Jacobi's Theorem gives a manifestly positive formula
for these coefficients. 
In fact, it is evident from the identity
$$2(a^2+b^2+c^2+d^2)=(a+b)^2+(a-b)^2+(c+d)^2+(c-d)^2,$$
that, for Lagrange's theorem, it suffices to show that all odd natural numbers
may be written as the sum of four squares whence it suffices to establish 
Jacobi's formula in this case, namely that
\begin{equation}\label{jacobi}\begin{array}{rcl}(\theta(q))^4-(\theta(-q))^4
&\!\!\!=\!\!\!&16(q+4q^3+6q^5+8q^7+13q^9+\cdots)\\[4pt]
&\!\!\!=\!\!\!&16(\sum_{m=0}^\infty\sigma(2m+1)q^{2m+1}),
\end{array}\end{equation}
where $\sigma(n)\equiv\sum_{d|n}d$ is the {\em sum-of-divisors function\/}. The
aim of this article is to prove~(\ref{jacobi}). It is well-known that this can
be accomplished using modular forms and this is what we shall do. However, some
of the tricky analysis can be avoided in favour of geometry. This is one
motivation for this article. Another is that a key feature of the usual proof,
namely that a certain vector space of modular forms is two-dimensional, is
replaced by the two-dimensionality of the solution space to a projectively
invariant linear differential equation. This reasoning is potentially
applicable for automorphic forms beyond complex analysis.

\section{The twice-punctured sphere}
It is not commonly realised that the first contributor to the theory of modular
forms was the cartographer Mercator, who in 1569 found an accurate conformal
map of the twice-punctured round sphere. With the punctures at the South and
North Poles, this {\em Mercator projection\/} is the default representation of
the Earth to be found in ordinary atlases\footnote{But we find it
convenient to put the southern hemisphere at the top.}. {From} a modern
perspective, it may be constructed in two steps:\pagebreak
\begin{itemize}
\item Use stereographic projection

{\setlength{\unitlength}{.7cm}
\begin{picture}(5,4.8)(-8,-2.9)
\linethickness{0.3mm}
 \qbezier(1.9,0.0)(1.9,0.787)(1.3435,1.3435)
 \qbezier(1.3435,1.3435)(0.787,1.9)(0.0,1.9)
 \qbezier(0.0,1.9)(-0.787,1.9)(-1.3435,1.3435)
 \qbezier(-1.3435,1.3435)(-1.9,0.787)(-1.9,0.0)
 \qbezier(-1.9,0.0)(-1.9,-0.787)(-1.3435,-1.3435)
 \qbezier(-1.3435,-1.3435)(-0.787,-1.9)(0.0,-1.9)
 \qbezier(0.0,-1.9)(0.787,-1.9)(1.3435,-1.3435)
 \qbezier(1.3435,-1.3435)(1.9,-0.787)(1.9,0.0)
 \qbezier(-1.9,0.0)(-1.9,-0.4)(0.0,-0.4)
 \qbezier(0.0,-0.4)(1.9,-0.4)(1.9,0.0)
{\linethickness{0.2mm}\qbezier(1.9,0.0)(1.9,0.4)(0.0,0.4)
 \qbezier(0.0,0.4)(-1.9,0.4)(-1.9,0.0)}
\thicklines
\put(-3,-2.7){\line(1,0){7}}
\put(-3,-2.7){\line(-1,1){1}}
\put(4,-2.7){\line(-1,1){1}}
\put(.2,2){$N$}
\put(0,1.9){\makebox(0,0){\large$\bullet$}}
\put(0,1.9){\line(1,-2){2.1}}
\put(2.1,-2.3){\makebox(0,0){\large$\bullet$}}
\put(1.35,-0.8){\makebox(0,0){\large$\bullet$}}
\put(0,-1.9){\makebox(0,0){\large$\bullet$}}
\put(.2,-1.7){$S$}
\end{picture}}

\noindent to identify $S^2\setminus\{N\}$ with the complex
plane~${\mathbb{C}}$. 
\item Use the complex logarithm to `unwrap' the punctured complex plane 
${\mathbb{C}}\setminus\{0\}$ to its universal cover~${\mathbb{C}}$. 
\end{itemize}
These two steps are conformal, the first by geometry or calculus, and 
the second by the Cauchy-Riemann equations. Explicit formul{\ae} are
$$\begin{array}{ccccc}{\mathbb{C}}&\longrightarrow
&{\mathbb{C}}\setminus\{0\}&\longrightarrow&S^2\setminus\{S,N\}\\[5pt]
\tau&\longmapsto&q=e^{2\pi i\tau}\\[-7pt]
&&q=u+iv&\longmapsto&\mbox{\footnotesize$\displaystyle\frac{1}{u^2+v^2+4}
\left[\begin{array}{c}4u\\ 4v\\ u^2+v^2-4\end{array}\right]$}
\end{array}$$
and we end up with two crucial (and conformal) facts:
\begin{itemize}
\item $S^2\setminus\{S,N\}\cong
\displaystyle\frac{\mathbb{C}}{\{\tau\sim\tau+1\}}$,
\item $q=e^{2\pi i\tau}$ is a local co\"ordinate on~$S^2$ near the South Pole.
\end{itemize}
Note that this essential appearance of the logarithm in the Mercator 
projection predates Napier and others (in 
the seventeenth century).

The Mercator realisation of the twice-punctured sphere
$$S^2\setminus\{S,N\}=S^2\setminus\{q=0,q=\infty\}$$
may already be used to prove some useful identities as follows.
\begin{thm} If\/ $q=e^{2\pi i\tau}$, then 
\begin{equation}\label{rite}
\sum_{d=-\infty}^{\infty}\frac1{(\tau+d)^2}=-4\pi^2\sum_{m=1}^\infty mq^m,
\quad\mbox{for}\enskip |q|<1.\end{equation}
\end{thm}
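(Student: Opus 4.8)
The plan is to establish the identity
\begin{equation}\label{rite}
\sum_{d=-\infty}^{\infty}\frac1{(\tau+d)^2}=-4\pi^2\sum_{m=1}^\infty mq^m
\end{equation}
by recognizing both sides as $\frac{d}{d\tau}$ of a more familiar identity, namely the classical partial-fractions expansion of the cotangent. Specifically, I would start from Euler's formula
$$\pi\cot(\pi\tau)=\frac1\tau+\sum_{d=1}^\infty\left(\frac1{\tau+d}+\frac1{\tau-d}\right)
=\sum_{d=-\infty}^\infty\frac1{\tau+d},$$
where the two-sided sum is understood as a symmetric limit. The left-hand side of~\eqref{rite} is then $-\frac{d}{d\tau}\,\pi\cot(\pi\tau)$, since differentiating $\frac1{\tau+d}$ term by term gives $-\frac1{(\tau+d)^2}$, and this series now converges absolutely so no symmetrization is needed.

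For the right-hand side, the key is to re-express $\pi\cot(\pi\tau)$ using the exponential. Writing $q=e^{2\pi i\tau}$, one has
$$\pi\cot(\pi\tau)=\pi\,\frac{\cos\pi\tau}{\sin\pi\tau}
=\pi i\,\frac{e^{i\pi\tau}+e^{-i\pi\tau}}{e^{i\pi\tau}-e^{-i\pi\tau}}
=\pi i\,\frac{q+1}{q-1}=\pi i\left(1+\frac{2q}{q-1}\right)
=\pi i-2\pi i\sum_{m=0}^\infty q^{m+1},$$
valid for $|q|<1$, where the last step uses the geometric series $\frac{q}{1-q}=\sum_{m=1}^\infty q^m$. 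Differentiating with respect to $\tau$, and using $\frac{dq}{d\tau}=2\pi i\,q$, I would obtain $\frac{d}{d\tau}\,\pi\cot(\pi\tau)=-2\pi i\cdot 2\pi i\sum_{m=1}^\infty m q^m=4\pi^2\sum_{m=1}^\infty m q^m$. Negating to match the cotangent-derivative computation above then yields exactly the right-hand side of~\eqref{rite}.

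The main obstacle I anticipate is justifying the term-by-term differentiation of the two-sided series and, relatedly, nailing down convergence carefully enough that the manipulations are rigorous rather than merely formal. The cotangent expansion itself converges only conditionally (as a symmetric limit), so differentiating it requires a uniform-convergence argument on compact subsets of the upper half-plane where $\operatorname{Im}\tau>0$ guarantees $|q|<1$; on such sets the differentiated series $\sum 1/(\tau+d)^2$ converges absolutely and locally uniformly, which is what legitimizes the interchange of sum and derivative. Granting that standard analytic fact, the remaining steps are purely algebraic identities among elementary functions. An alternative that sidesteps the cotangent entirely would be to sum $\sum_d 1/(\tau+d)^2$ directly by residues or by the Poisson summation formula, but the cotangent route is the most economical since it reuses a single classical identity.
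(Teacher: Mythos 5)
Your argument is correct, up to one harmless algebraic slip, but it takes a genuinely different route from the paper's. The slip first: $\frac{q+1}{q-1}=1+\frac{2}{q-1}$, not $1+\frac{2q}{q-1}$, so the constant term of your expansion should be $-\pi i$ rather than $+\pi i$; since the constant dies under $d/d\tau$, your final identity is unaffected. Your route is the classical one: quote Euler's partial-fraction expansion of $\pi\cot(\pi\tau)$, rewrite the cotangent as $\pi i\,(q+1)/(q-1)$, expand the geometric series, and differentiate term by term (legitimately, as you note, because $\sum_d(\tau+d)^{-2}$ converges absolutely and locally uniformly). This is precisely the trigonometric computation the paper deliberately avoids --- immediately after this theorem the authors remark that such identities are usually established via trigonometric expansions regarded as a ``standard rite of passage into modular forms.'' The paper instead argues geometrically, using its Mercator identification of the twice-punctured sphere: the left-hand side converges locally uniformly on ${\mathbb{C}}\setminus{\mathbb{Z}}$ and is invariant under $\tau\mapsto\tau+1$, so it descends to a holomorphic function $F(q)$ on $S^2\setminus\{0,1,\infty\}$; since $F(q)\to 0$ as $q\to 0$ and $F(1/q)=F(q)$ (from the symmetry $\tau\mapsto-\tau$), $F$ extends to $S^2$ with zeros at $q=0,\infty$ and a double pole at $q=1$, which forces $F(q)=Cq/(q-1)^2$, and $C=-4\pi^2$ is then fixed by evaluating at $\tau=1/2$ via $\sum_{d}(2d+1)^{-2}=\pi^2/4$. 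As for what each approach buys: yours is shorter if the cotangent expansion is admitted as known, but that expansion itself requires a proof of comparable substance (Mittag-Leffler, Herglotz, or the like), so the economy is partly borrowed; the paper's proof is self-contained given the Mercator picture, replaces all trigonometry by periodicity, symmetry, and the classification of meromorphic functions on the sphere with prescribed poles, and serves as the template for the geometric reasoning (descent to the punctured sphere, puncture repair, residue counting) that drives the rest of the paper. Your suggested alternative of Poisson summation is also viable, but the paper reserves Poisson summation for the theta transformation law, not for this identity.
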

\begin{proof} It is easy to check that the left hand side is uniformly
convergent on compact subsets of~${\mathbb{C}}\setminus{\mathbb{Z}}$. It is
invariant under $\tau\mapsto\tau+1$ and therefore descends to a
holomorphic function on the thrice-punctured sphere:
$$S^2\setminus\{q=0,q=\infty,q=1\}.$$
Let us call this function $F(q)$ and note that
\begin{itemize}
\item $F(q)\to 0$ as $q\to 0$,
\item $F(1/q)=F(q)$.
\end{itemize}
It follows that $F(q)$ extends holomorphically through $q=0$ and $q=\infty$ and
has zeroes at these two points whilst at $q=1$ it clearly extends
meromorphically with a double pole there. Hence,
$$F(q)=C\frac{q}{(q-1)^2}$$
for some constant~$C$. To compute $C$, we may substitute $\tau=1/2$ to find that
$$C=-16\sum_{d=-\infty}^\infty\frac1{(2d+1)^2}=-16\frac{\pi^2}4=-4\pi^2.$$
Finally, if $|q|<1$, then
$$\frac{q}{(q-1)^2}=q\frac\partial{\partial q}\frac1{1-q}
=q\frac\partial{\partial q}\sum_{m=0}^\infty q^m=\sum_{m=1}^\infty mq^m,$$
as required.
\end{proof}
\begin{cor} For $q=e^{2\pi i\tau}$ and $|q|<1$,
\begin{equation}\label{another_rite}
\sum_{d=-\infty}^{\infty}\frac1{(\tau+d)^4}
=\frac{8\pi^4}3\sum_{m=1}^\infty m^3q^m.\end{equation}
\end{cor}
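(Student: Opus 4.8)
The plan is to deduce (\ref{another_rite}) from the Theorem by differentiating the identity (\ref{rite}) twice with respect to~$\tau$. On the left-hand side each term satisfies
$$\frac{d^2}{d\tau^2}\,\frac1{(\tau+d)^2}=\frac{6}{(\tau+d)^4},$$
so that $\sum_d(\tau+d)^{-4}=\tfrac16\,\tfrac{d^2}{d\tau^2}\sum_d(\tau+d)^{-2}$. First I would observe that the series $\sum_d(\tau+d)^{-2}$ was already shown, in the proof of the Theorem, to converge uniformly on compact subsets of ${\mathbb{C}}\setminus{\mathbb{Z}}$; by Weierstrass's theorem on locally uniform limits of holomorphic functions, one may therefore differentiate it term by term, and the two differentiations produce exactly $6\sum_d(\tau+d)^{-4}$.

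For the right-hand side I would pass from $\tau$ to $q$. Since $q=e^{2\pi i\tau}$, the chain rule gives $\frac{d}{d\tau}=2\pi i\,q\frac{d}{dq}$, and the operator $q\frac{d}{dq}$ sends $q^m$ to $mq^m$. Hence $\frac{d^2}{d\tau^2}=(2\pi i)^2\bigl(q\tfrac{d}{dq}\bigr)^2$ multiplies each $m$ by $m^2$ and contributes a constant factor $(2\pi i)^2=-4\pi^2$; differentiation of the power series is legitimate inside its disc of convergence $|q|<1$. Thus
$$\frac{d^2}{d\tau^2}\Bigl(-4\pi^2\sum_{m=1}^\infty mq^m\Bigr)
=-4\pi^2\,(2\pi i)^2\sum_{m=1}^\infty m^3q^m
=16\pi^4\sum_{m=1}^\infty m^3q^m.$$
Equating the two sides and dividing by~$6$ yields $\tfrac{8\pi^4}{3}\sum_{m=1}^\infty m^3q^m$, which is precisely (\ref{another_rite}).

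I expect the computation itself to be entirely mechanical, so the only point genuinely requiring care is the justification of term-by-term differentiation on each side. On the left this is supplied by the locally uniform convergence already established for the second-power sum, and on the right by the standard fact that a power series may be differentiated freely within its radius of convergence. Once these are granted, no further analytic input is needed, which is in keeping with the article's aim of replacing delicate analysis by cleaner geometric and function-theoretic reasoning.
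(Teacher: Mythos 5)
Your proposal is correct and is essentially identical to the paper's own proof, which likewise applies the operator $\frac{\partial}{\partial\tau}=2\pi i\,q\frac{\partial}{\partial q}$ twice to~(\ref{rite}); your constants check out, since $6\sum_d(\tau+d)^{-4}=16\pi^4\sum_m m^3q^m$ gives the stated factor $\frac{8\pi^4}{3}$. The only difference is that you spell out the justification for term-by-term differentiation (locally uniform convergence on the left, differentiation of a power series within its disc on the right), which the paper leaves implicit.
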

\begin{proof} By the chain rule
$$\frac\partial{\partial\tau}=2\pi i q\frac\partial{\partial q},$$
and applying this operator twice to~(\ref{rite}) gives the required identity.  
\end{proof}
We remark that identities such as (\ref{rite}) and (\ref{another_rite}) are
often established using `unfamiliar expressions' for trigonometric functions
and regarded as a `standard rite of passage into modular
forms'~\cite[p.~5]{DS}. Already, we see the utility of the Mercator projection
in identifying the universal cover of the twice-punctured sphere and it is
natural to ask about a similar identification for the thrice-punctured sphere.

\section{The thrice-punctured sphere}\label{thrice_punctured}
Our exposition in this section follows advice from Tony Scholl to the 
the first author in 1984\@.

Let $\Sigma$ be the thrice-punctured Riemann sphere. More specifically, let us
use the standard co\"ordinate
$z\in{\mathbb{C}}\hookrightarrow{\mathbb{C}}\sqcup\{\infty\}=S^2$, and set
$$\Sigma\equiv S^2\setminus\{0,1,\infty\}
=\{z\in{\mathbb{C}}\mid z\not=0,1\}.$$
By the Riemann mapping theorem there is a conformal isomorphism between the
lower half plane
$$\{z=x+iy\in{\mathbb{C}}\mid y<0\}$$
and the following subset
\begin{equation}\label{ideal_UHP}
\raisebox{-32pt}{\setlength{\unitlength}{.4cm}
\begin{picture}(5,6)(-6,-1)
\linethickness{0.3mm}
 \qbezier(1.9,0.0)(1.9,0.787)(1.3435,1.3435)
 \qbezier(1.3435,1.3435)(0.787,1.9)(0.0,1.9)
 \qbezier(0.0,1.9)(-0.787,1.9)(-1.3435,1.3435)
 \qbezier(-1.3435,1.3435)(-1.9,0.787)(-1.9,0.0)
\thicklines
\put(-3.8,0){\vector(1,0){10}}
\put(-1.9,0){\vector(0,1){6}}
\put(1.9,0){\line(0,1){6}}
\put(-1.9,0){\makebox(0,0){$\bullet$}}
\put(1.9,0){\makebox(0,0){$\bullet$}}
\put(-1.9,-.7){\makebox(0,0){$\tau\!=\!0$}}
\put(1.9,-.7){\makebox(0,0){$\tau\!=\!1/2$}}
\put(5,.3){$s$}
\put(-1.6,4.7){$t$}
\thinlines
\put(-1.9,.8){\line(1,0){.2}}
\put(1.9,.8){\line(-1,0){.2}}
\put(-1.9,1.1){\line(1,0){.3}}
\put(1.9,1.1){\line(-1,0){.3}}
\put(-1.9,1.4){\line(1,0){.6}}
\put(1.9,1.4){\line(-1,0){.6}}
\put(-1.9,1.7){\line(1,0){1}}
\put(1.9,1.7){\line(-1,0){1}}
\put(-1.9,2){\line(1,0){3.8}}
\put(-1.9,2.3){\line(1,0){3.8}}
\put(-1.9,2.6){\line(1,0){3.8}}
\put(-1.9,2.9){\line(1,0){3.8}}
\put(-1.9,3.2){\line(1,0){3.8}}
\put(-1.9,3.5){\line(1,0){3.8}}
\put(-1.9,3.8){\line(1,0){3.8}}
\put(-1.9,4.1){\line(1,0){3.8}}
\put(-1.9,4.4){\line(1,0){3.8}}
\put(1.9,4.7){\line(-1,0){3}}
\put(1.9,5){\line(-1,0){2.8}}
\put(1.9,5.3){\line(-1,0){2.8}}
\put(1.9,5.6){\line(-1,0){3}}
\put(1.9,5.9){\line(-1,0){3.2}}
\end{picture}}
\end{equation}
of the upper half plane ${\mathcal{H}}\equiv\{\tau=s+it\mid t>0\}$. In fact, 
as with all Riemann mappings, there is a three-parameter family thereof 
and we need to specify just one of them. To do this let us extend the lower 
half plane as the complement of two rays
\begin{center}\begin{picture}(0,45)
\thicklines
\put(-30,25){\vector(-1,0){130}}
\put(30,25){\vector(1,0){130}}
\put(30,25){\makebox(0,0){$\bullet$}}
\put(-30,25){\makebox(0,0){$\bullet$}}
\put(-30,15){\makebox(0,0){$z=0$}}
\put(30,15){\makebox(0,0){$z=1$}}
\put(0,25){\makebox(0,0){\Large$\bullet$}}
\thinlines
\put(0,25){\vector(1,0){20}}
\end{picture}\end{center}
extend the target domain as
\begin{center}
{\setlength{\unitlength}{.4cm}
\begin{picture}(5,6)(-6,-1)
\linethickness{0.3mm}
 \qbezier(1.9,0.0)(1.9,0.787)(1.3435,1.3435)
 \qbezier(1.3435,1.3435)(0.787,1.9)(0.0,1.9)
 \qbezier(0.0,1.9)(-0.787,1.9)(-1.3435,1.3435)
 \qbezier(-1.3435,1.3435)(-1.9,0.787)(-1.9,0.0)
 \qbezier(5.7,0.0)(5.7,0.787)(5.1435,1.3435)
 \qbezier(5.1435,1.3435)(4.587,1.9)(3.8,1.9)
 \qbezier(3.8,1.9)(3.013,1.9)(2.4565,1.3435)
 \qbezier(2.4565,1.3435)(1.9,0.787)(1.9,0.0)
\thicklines
\put(-3.8,0){\line(1,0){10}}
\put(-1.9,0){\line(0,1){6}}
\put(5.7,0){\line(0,1){6}}
\put(-1.9,0){\makebox(0,0){$\bullet$}}
\put(1.9,0){\makebox(0,0){$\bullet$}}
\put(5.7,0){\makebox(0,0){$\bullet$}}
\put(-1.9,-.7){\makebox(0,0){$\tau\!=\!0$}}
\put(1.9,-.7){\makebox(0,0){$\tau\!=\!1/2$}}
\put(5.7,-.7){\makebox(0,0){$\tau\!=\!1$}}
\thinlines
\put(-1.9,.8){\line(1,0){.2}}
\put(1.9,.8){\line(-1,0){.2}}
\put(1.9,.8){\line(1,0){.2}}
\put(5.7,.8){\line(-1,0){.2}}
\put(-1.9,1.1){\line(1,0){.3}}
\put(1.9,1.1){\line(-1,0){.3}}
\put(1.9,1.1){\line(1,0){.3}}
\put(5.7,1.1){\line(-1,0){.3}}
\put(-1.9,1.4){\line(1,0){.6}}
\put(1.9,1.4){\line(-1,0){.6}}
\put(1.9,1.4){\line(1,0){.6}}
\put(5.7,1.4){\line(-1,0){.6}}
\put(-1.9,1.7){\line(1,0){1}}
\put(1.9,1.7){\line(-1,0){1}}
\put(1.9,1.7){\line(1,0){1}}
\put(5.7,1.7){\line(-1,0){1}}
\put(-1.9,2){\line(1,0){7.6}}
\put(-1.9,2.3){\line(1,0){7.6}}
\put(-1.9,2.6){\line(1,0){7.6}}
\put(-1.9,2.9){\line(1,0){7.6}}
\put(-1.9,3.2){\line(1,0){7.6}}
\put(-1.9,3.5){\line(1,0){7.6}}
\put(-1.9,3.8){\line(1,0){7.6}}
\put(-1.9,4.1){\line(1,0){7.6}}
\put(-1.9,4.4){\line(1,0){7.6}}
\put(-1.9,4.7){\line(1,0){7.6}}
\put(-1.9,5){\line(1,0){7.6}}
\put(-1.9,5.3){\line(1,0){7.6}}
\put(-1.9,5.6){\line(1,0){7.6}}
\put(-1.9,5.9){\line(1,0){7.6}}
\put(1.9,3.8){\makebox(0,0){\Large$\bullet$}}
\put(1.9,3.8){\vector(0,-1){2}}
\end{picture}}
\end{center}
and consider the Riemann mapping between these extensions that sends
$z\!=\!1/2$ to $\tau\!=\!(1+i)/2$ and, at these points, sends the direction
$\partial/\partial x$ to $-\partial/\partial t$, as shown. 

This particular Riemann mapping is chosen so that it intertwines the involution
$z\mapsto 1-z$ (having fixed point $z\!=\!1/2$) with the involution
$\tau\mapsto(\tau-1)/(2\tau-1)$ of ${\mathcal{H}}$ (fixing $\tau\!=\!(1+i)/2$
and preserving the extended target). 

We conclude that the lower half plane is sent to the `tile'
\smallskip\begin{center}
{\setlength{\unitlength}{.4cm}
\begin{picture}(5,6)(-6,0)
\linethickness{0.3mm}
 \qbezier(1.9,0.0)(1.9,0.787)(1.3435,1.3435)
 \qbezier(1.3435,1.3435)(0.787,1.9)(0.0,1.9)
 \qbezier(0.0,1.9)(-0.787,1.9)(-1.3435,1.3435)
 \qbezier(-1.3435,1.3435)(-1.9,0.787)(-1.9,0.0)
\thicklines
\put(-3.8,0){\line(1,0){10}}
\put(-1.9,0){\line(0,1){6}}
\put(1.9,0){\line(0,1){6}}
\put(-1.9,0){\makebox(0,0){$\bullet$}}
\put(1.9,0){\makebox(0,0){$\bullet$}}
\thinlines
\put(-1.9,.8){\line(1,0){.2}}
\put(1.9,.8){\line(-1,0){.2}}
\put(-1.9,1.1){\line(1,0){.3}}
\put(1.9,1.1){\line(-1,0){.3}}
\put(-1.9,1.4){\line(1,0){.6}}
\put(1.9,1.4){\line(-1,0){.6}}
\put(-1.9,1.7){\line(1,0){1}}
\put(1.9,1.7){\line(-1,0){1}}
\put(-1.9,2){\line(1,0){3.8}}
\put(-1.9,2.3){\line(1,0){3.8}}
\put(-1.9,2.6){\line(1,0){3.8}}
\put(-1.9,2.9){\line(1,0){3.8}}
\put(-1.9,3.2){\line(1,0){3.8}}
\put(-1.9,3.5){\line(1,0){3.8}}
\put(-1.9,3.8){\line(1,0){3.8}}
\put(-1.9,4.1){\line(1,0){3.8}}
\put(-1.9,4.4){\line(1,0){3.8}}
\put(-3.6,1.8){\makebox(0,0)[r]{\framebox{$z=\infty\vphantom{1}$}}}
\put(-3.6,5){\makebox(0,0)[r]{\framebox{$z=0$}}}
\put(3.6,1.8){\makebox(0,0)[l]{\framebox{$z=1$}}}
\put(-3.6,1.7){\vector(1,-1){1.3}}
\put(3.6,1.7){\vector(-1,-1){1.3}}
\put(-3.6,5){\line(1,0){3.6}}
\put(0,5){\vector(0,1){1}}
\end{picture}}
\end{center}
\smallskip
and that this mapping holomorphically extends across the line segment $[0,1]$
to the upper half plane, which itself is sent to a neighbouring and translated
tile attached to the right of the original. It is illuminating to view this 
construction on the sphere
\begin{center}{\setlength{\unitlength}{1cm}
\begin{picture}(0,4)(0,-2)
\linethickness{0.3mm}
 \qbezier(1.9,0.0)(1.9,0.787)(1.3435,1.3435)
 \qbezier(1.3435,1.3435)(0.787,1.9)(0.0,1.9)
 \qbezier(0.0,1.9)(-0.787,1.9)(-1.3435,1.3435)
 \qbezier(-1.3435,1.3435)(-1.9,0.787)(-1.9,0.0)
 \qbezier(-1.9,0.0)(-1.9,-0.787)(-1.3435,-1.3435)
 \qbezier(-1.3435,-1.3435)(-0.787,-1.9)(0.0,-1.9)
 \qbezier(0.0,-1.9)(0.787,-1.9)(1.3435,-1.3435)
 \qbezier(1.3435,-1.3435)(1.9,-0.787)(1.9,0.0)
 \qbezier(-1.9,0.0)(-1.9,-0.4)(0.0,-0.4)
 \qbezier(0.0,-0.4)(1.9,-0.4)(1.9,0.0)
{\linethickness{0.2mm}\qbezier(1.9,0.0)(1.9,0.4)(0.0,0.4)
 \qbezier(0.0,0.4)(-1.9,0.4)(-1.9,0.0)}
\put(0,0.4){\makebox(0,0){\large$\bullet$}}
\put(-1.4,-.3){\makebox(0,0){\large$\bullet$}}
\put(1.4,-.3){\makebox(0,0){\large$\bullet$}}
\put(0,.72){\makebox(0,0){$\infty$}}
\put(-1.13,-0.05){\makebox(0,0){$0$}}
\put(1.13,-0.05){\makebox(0,0){$1$}}
\end{picture}}
\end{center}
with the lower hemisphere as domain and, replacing the upper half plane by the 
unit disc with its hyperbolic metric, the target is now the ideal 
triangle: 
\begin{equation}\label{ideal}
\raisebox{-53pt}{\setlength{\unitlength}{2cm}
\begin{picture}(2,2)(-1,-1)
\linethickness{0.3mm}
 \qbezier(1.000000,0.000000)(1.000000,0.267948)(0.866025,0.500000)
 \qbezier(0.866025,0.500000)(0.732052,0.732052)(0.500000,0.866025)
 \qbezier(0.500000,0.866025)(0.267948,1.000000)(0.000000,1.000000)
 \qbezier(0.000000,1.000000)(-0.267948,1.000000)(-0.500000,0.866025)
 \qbezier(-0.500000,0.866025)(-0.732052,0.732052)(-0.866025,0.500000)
 \qbezier(-0.866025,0.500000)(-1.000000,0.267948)(-1.000000,0.000000)
 \qbezier(-1.000000,0.000000)(-1.000000,-0.267948)(-0.866025,-0.500000)
 \qbezier(-0.866025,-0.500000)(-0.732052,-0.732052)(-0.500000,-0.866025)
 \qbezier(-0.500000,-0.866025)(-0.267948,-1.000000)(0.000000,-1.000000)
 \qbezier(0.000000,-1.000000)(0.267948,-1.000000)(0.500000,-0.866025)
 \qbezier(0.500000,-0.866025)(0.732052,-0.732052)(0.866025,-0.500000)
 \qbezier(0.866025,-0.500000)(1.000000,-0.267948)(1.000000,0.000000)
\put(1.000000,0.000000){\makebox(0,0){$\bullet$}}
\put(-0.500000,0.866025){\makebox(0,0){$\bullet$}}
\put(-0.500000,-0.866025){\makebox(0,0){$\bullet$}}
 \qbezier(1.000000,0.000000)(0.535898,0.000000)(0.133975,0.232051)
 \qbezier(1.000000,0.000000)(0.535898,0.000000)(0.133975,-0.232051)
 \qbezier(-0.500000,0.866025)(-0.267949,0.464102)(-0.267949,0.000000)
 \qbezier(-0.500000,0.866025)(-0.267949,0.464102)(0.133975,0.232051)
 \qbezier(-0.500000,-0.866025)(-0.267949,-0.464102)(0.133975,-0.232051)
 \qbezier(-0.500000,-0.866025)(-0.267949,-0.464102)(-0.267949,0.000000)
\put(1.5,0){\makebox(0,0)[l]{\framebox{$z=1$}}}
\put(-1.2,.9){\makebox(0,0)[r]{\framebox{$z=0$}}}
\put(-1.2,-.9){\makebox(0,0)[r]{\framebox{$z=\infty\vphantom{1}$}}}
\thinlines
\put(1.5,0){\vector(-1,0){.35}}
\put(-1.2,.9){\vector(1,0){.5}}
\put(-1.2,-.9){\vector(1,0){.5}}
\end{picture}}
\end{equation}
{From} this point of view, the mapping extends holomorphically through a
`portal' in the equator between $0$ and $1$ to the upper hemisphere, with the
result mapping to
\begin{center}{\setlength{\unitlength}{2cm}
\begin{picture}(2,2)(-1,-1)
\linethickness{0.3mm}
 \qbezier(1.000000,0.000000)(1.000000,0.267948)(0.866025,0.500000)
 \qbezier(0.866025,0.500000)(0.732052,0.732052)(0.500000,0.866025)
 \qbezier(0.500000,0.866025)(0.267948,1.000000)(0.000000,1.000000)
 \qbezier(0.000000,1.000000)(-0.267948,1.000000)(-0.500000,0.866025)
 \qbezier(-0.500000,0.866025)(-0.732052,0.732052)(-0.866025,0.500000)
 \qbezier(-0.866025,0.500000)(-1.000000,0.267948)(-1.000000,0.000000)
 \qbezier(-1.000000,0.000000)(-1.000000,-0.267948)(-0.866025,-0.500000)
 \qbezier(-0.866025,-0.500000)(-0.732052,-0.732052)(-0.500000,-0.866025)
 \qbezier(-0.500000,-0.866025)(-0.267948,-1.000000)(0.000000,-1.000000)
 \qbezier(0.000000,-1.000000)(0.267948,-1.000000)(0.500000,-0.866025)
 \qbezier(0.500000,-0.866025)(0.732052,-0.732052)(0.866025,-0.500000)
 \qbezier(0.866025,-0.500000)(1.000000,-0.267948)(1.000000,0.000000)
\put(1.000000,0.000000){\makebox(0,0){$\bullet$}}
\put(-0.500000,0.866025){\makebox(0,0){$\bullet$}}
\put(-0.500000,-0.866025){\makebox(0,0){$\bullet$}}
\put(0.500000,0.866025){\makebox(0,0){$\bullet$}}
\linethickness{0.2mm}
 \qbezier(1.000000,0.000000)(0.535898,0.000000)(0.133975,0.232051)
 \qbezier(-0.500000,0.866025)(-0.267949,0.464102)(0.133975,0.232051)
\linethickness{0.3mm}
 \qbezier(1.000000,0.000000)(0.535898,0.000000)(0.133975,-0.232051)
 \qbezier(-0.500000,0.866025)(-0.267949,0.464102)(-0.267949,0.000000)
 \qbezier(-0.500000,-0.866025)(-0.267949,-0.464102)(0.133975,-0.232051)
 \qbezier(-0.500000,-0.866025)(-0.267949,-0.464102)(-0.267949,0.000000)
 \qbezier(1.000000,0.000000)(0.666667,0.000000)(0.500000,0.288675)
 \qbezier(0.500000,0.866025)(0.333333,0.577350)(0.000000,0.577350)
 \qbezier(0.500000,0.866025)(0.333333,0.577350)(0.500000,0.288675)
 \qbezier(-0.500000,0.866025)(-0.333333,0.577350)(0.000000,0.577350) 
\put(1.5,0){\makebox(0,0)[l]{\framebox{$z=1$}}}
\put(1.5,.9){\makebox(0,0)[l]{\framebox{$z=\infty\vphantom{1}$}}}
\put(-1.2,.9){\makebox(0,0)[r]{\framebox{$z=0$}}}
\put(-1.2,-.9){\makebox(0,0)[r]{\framebox{$z=\infty\vphantom{1}$}}}
\thinlines
\put(1.5,0){\vector(-1,0){.35}}
\put(1.5,.9){\vector(-1,0){.8}}
\put(-1.2,.9){\vector(1,0){.5}}
\put(-1.2,-.9){\vector(1,0){.5}}
\put(-1.6,-.35){\makebox(0,0)[r]{\framebox{\begin{tabular}{r}Southern\\ 
Hemisphere\end{tabular}}}}
\put(-1.4,.35){\makebox(0,0)[r]{\framebox{\begin{tabular}{r}Northern\\ 
Hemisphere\end{tabular}}}}
\put(-1.4,.4){\vector(1,0){1.5}}
\put(-1.6,-.1){\vector(1,0){1.5}}
\end{picture}}
\end{center}
However, there are three such portals to the upper hemisphere, all on an equal
footing with respect to the evident three-fold rotational symmetry. Using all
three unwraps the thrice-punctured sphere to
\begin{equation}\label{tessellation}
\raisebox{-53pt}{\setlength{\unitlength}{2cm}
\begin{picture}(2,2)(-1,-1)
\linethickness{0.3mm}
 \qbezier(1.000000,0.000000)(1.000000,0.267948)(0.866025,0.500000)
 \qbezier(0.866025,0.500000)(0.732052,0.732052)(0.500000,0.866025)
 \qbezier(0.500000,0.866025)(0.267948,1.000000)(0.000000,1.000000)
 \qbezier(0.000000,1.000000)(-0.267948,1.000000)(-0.500000,0.866025)
 \qbezier(-0.500000,0.866025)(-0.732052,0.732052)(-0.866025,0.500000)
 \qbezier(-0.866025,0.500000)(-1.000000,0.267948)(-1.000000,0.000000)
 \qbezier(-1.000000,0.000000)(-1.000000,-0.267948)(-0.866025,-0.500000)
 \qbezier(-0.866025,-0.500000)(-0.732052,-0.732052)(-0.500000,-0.866025)
 \qbezier(-0.500000,-0.866025)(-0.267948,-1.000000)(0.000000,-1.000000)
 \qbezier(0.000000,-1.000000)(0.267948,-1.000000)(0.500000,-0.866025)
 \qbezier(0.500000,-0.866025)(0.732052,-0.732052)(0.866025,-0.500000)
 \qbezier(0.866025,-0.500000)(1.000000,-0.267948)(1.000000,0.000000)
 \qbezier(1.000000,0.000000)(0.535898,0.000000)(0.133975,0.232051)
 \qbezier(1.000000,0.000000)(0.535898,0.000000)(0.133975,-0.232051)
 \qbezier(-0.500000,0.866025)(-0.267949,0.464102)(-0.267949,0.000000)
 \qbezier(-0.500000,0.866025)(-0.267949,0.464102)(0.133975,0.232051)
 \qbezier(-0.500000,-0.866025)(-0.267949,-0.464102)(0.133975,-0.232051)
 \qbezier(-0.500000,-0.866025)(-0.267949,-0.464102)(-0.267949,0.000000)
\linethickness{0.2mm}
 \qbezier(1.000000,0.000000)(0.666667,0.000000)(0.500000,0.288675)
 \qbezier(1.000000,0.000000)(0.666667,0.000000)(0.500000,-0.288675)
 \qbezier(0.500000,0.866025)(0.333333,0.577350)(0.000000,0.577350)
 \qbezier(0.500000,0.866025)(0.333333,0.577350)(0.500000,0.288675)
 \qbezier(-0.500000,0.866025)(-0.333333,0.577350)(-0.500000,0.288675)
 \qbezier(-0.500000,0.866025)(-0.333333,0.577350)(0.000000,0.577350) 
 \qbezier(-1.000000,0.000000)(-0.666667,0.000000)(-0.500000,-0.288675)
 \qbezier(-1.000000,0.000000)(-0.666667,0.000000)(-0.500000,0.288675)
 \qbezier(-0.500000,-0.866025)(-0.33333,-0.577350)(0.000000,-0.577350)
 \qbezier(-0.500000,-0.866025)(-0.33333,-0.577350)(-0.500000,-0.288675)
 \qbezier(0.500000,-0.866025)(0.333333,-0.577350)(0.500000,-0.288675)
 \qbezier(0.500000,-0.866025)(0.333333,-0.577350)(0.000000,-0.577350)
\end{picture}}
\end{equation}
and, of course, we can keep going to and fro between north and south through
our three portals to obtain a tessellation\footnote{Familiar from the works of
M.C.~Escher.} of the hyperbolic disc~$\Delta$ and a conformal covering
$\Delta\to S^2\setminus\{0,1,\infty\}$. This is an explicit realisation of the
universal covering. We remark that the Little Picard Theorem follows
immediately from this realisation.

\section{Symmetries of the upper half plane}\label{upper_half_plane}
The reader may be wondering why we viewed the extended target as a domain in 
the upper half plane 
\begin{center}
{\setlength{\unitlength}{.4cm}
\begin{picture}(5,6)(-6,-1)
\linethickness{0.3mm}
 \qbezier(1.9,0.0)(1.9,0.787)(1.3435,1.3435)
 \qbezier(1.3435,1.3435)(0.787,1.9)(0.0,1.9)
 \qbezier(0.0,1.9)(-0.787,1.9)(-1.3435,1.3435)
 \qbezier(-1.3435,1.3435)(-1.9,0.787)(-1.9,0.0)
 \qbezier(5.7,0.0)(5.7,0.787)(5.1435,1.3435)
 \qbezier(5.1435,1.3435)(4.587,1.9)(3.8,1.9)
 \qbezier(3.8,1.9)(3.013,1.9)(2.4565,1.3435)
 \qbezier(2.4565,1.3435)(1.9,0.787)(1.9,0.0)
\thicklines
\put(-3.8,0){\line(1,0){10}}
\put(-1.9,0){\line(0,1){6}}
\put(5.7,0){\line(0,1){6}}
\put(-1.9,0){\makebox(0,0){$\bullet$}}
\put(1.9,0){\makebox(0,0){$\bullet$}}
\put(5.7,0){\makebox(0,0){$\bullet$}}
\put(-1.9,-.7){\makebox(0,0){$\tau\!=\!0$}}
\put(1.9,-.7){\makebox(0,0){$\tau\!=\!1/2$}}
\put(5.7,-.7){\makebox(0,0){$\tau\!=\!1$}}
\thinlines
\put(-1.9,.8){\line(1,0){.2}}
\put(1.9,.8){\line(-1,0){.2}}
\put(1.9,.8){\line(1,0){.2}}
\put(5.7,.8){\line(-1,0){.2}}
\put(-1.9,1.1){\line(1,0){.3}}
\put(1.9,1.1){\line(-1,0){.3}}
\put(1.9,1.1){\line(1,0){.3}}
\put(5.7,1.1){\line(-1,0){.3}}
\put(-1.9,1.4){\line(1,0){.6}}
\put(1.9,1.4){\line(-1,0){.6}}
\put(1.9,1.4){\line(1,0){.6}}
\put(5.7,1.4){\line(-1,0){.6}}
\put(-1.9,1.7){\line(1,0){1}}
\put(1.9,1.7){\line(-1,0){1}}
\put(1.9,1.7){\line(1,0){1}}
\put(5.7,1.7){\line(-1,0){1}}
\put(-1.9,2){\line(1,0){7.6}}
\put(-1.9,2.3){\line(1,0){7.6}}
\put(-1.9,2.6){\line(1,0){7.6}}
\put(-1.9,2.9){\line(1,0){7.6}}
\put(-1.9,3.2){\line(1,0){7.6}}
\put(-1.9,3.5){\line(1,0){7.6}}
\put(-1.9,3.8){\line(1,0){7.6}}
\put(-1.9,4.1){\line(1,0){7.6}}
\put(-1.9,4.4){\line(1,0){7.6}}
\put(-1.9,4.7){\line(1,0){7.6}}
\put(-1.9,5){\line(1,0){7.6}}
\put(-1.9,5.3){\line(1,0){7.6}}
\put(-1.9,5.6){\line(1,0){7.6}}
\put(-1.9,5.9){\line(1,0){7.6}}
\put(-5.5,2.5){\Large${\mathcal{D}}={}$}
\end{picture}}
\end{center}
rather than the corresponding domain in the unit disc:
\begin{center}{\setlength{\unitlength}{2cm}
\begin{picture}(2,2)(-1,-1)
\linethickness{0.3mm}
 \qbezier(1.000000,0.000000)(1.000000,0.267948)(0.866025,0.500000)
 \qbezier(0.866025,0.500000)(0.732052,0.732052)(0.500000,0.866025)
 \qbezier(0.500000,0.866025)(0.267948,1.000000)(0.000000,1.000000)
 \qbezier(0.000000,1.000000)(-0.267948,1.000000)(-0.500000,0.866025)
 \qbezier(-0.500000,0.866025)(-0.732052,0.732052)(-0.866025,0.500000)
 \qbezier(-0.866025,0.500000)(-1.000000,0.267948)(-1.000000,0.000000)
 \qbezier(-1.000000,0.000000)(-1.000000,-0.267948)(-0.866025,-0.500000)
 \qbezier(-0.866025,-0.500000)(-0.732052,-0.732052)(-0.500000,-0.866025)
 \qbezier(-0.500000,-0.866025)(-0.267948,-1.000000)(0.000000,-1.000000)
 \qbezier(0.000000,-1.000000)(0.267948,-1.000000)(0.500000,-0.866025)
 \qbezier(0.500000,-0.866025)(0.732052,-0.732052)(0.866025,-0.500000)
 \qbezier(0.866025,-0.500000)(1.000000,-0.267948)(1.000000,0.000000)
\put(1.000000,0.000000){\makebox(0,0){$\bullet$}}
\put(-0.500000,0.866025){\makebox(0,0){$\bullet$}}
\put(-0.500000,-0.866025){\makebox(0,0){$\bullet$}}
\put(0.500000,0.866025){\makebox(0,0){$\bullet$}}
 \qbezier(1.000000,0.000000)(0.535898,0.000000)(0.133975,-0.232051)
 \qbezier(-0.500000,0.866025)(-0.267949,0.464102)(-0.267949,0.000000)
 \qbezier(-0.500000,-0.866025)(-0.267949,-0.464102)(0.133975,-0.232051)
 \qbezier(-0.500000,-0.866025)(-0.267949,-0.464102)(-0.267949,0.000000)
 \qbezier(1.000000,0.000000)(0.666667,0.000000)(0.500000,0.288675)
 \qbezier(0.500000,0.866025)(0.333333,0.577350)(0.000000,0.577350)
 \qbezier(0.500000,0.866025)(0.333333,0.577350)(0.500000,0.288675)
 \qbezier(-0.500000,0.866025)(-0.333333,0.577350)(0.000000,0.577350) 
\thinlines
\put(-.41,-.7){\line(1,0){.02}}
\put(-.39,-.65){\line(1,0){.04}}
\put(-.38,-.6){\line(1,0){.06}}
\put(-.36,-.55){\line(1,0){.09}}
\put(-.35,-.5){\line(1,0){.13}}
\put(-.33,-.45){\line(1,0){.16}}
\put(-.31,-.4){\line(1,0){.2}}
\put(-.3,-.35){\line(1,0){.26}}
\put(-.29,-.3){\line(1,0){.32}}
\put(-.28,-.25){\line(1,0){.38}}
\put(-.27,-.2){\line(1,0){.46}}
\put(-.27,-.15){\line(1,0){.55}}
\put(-.27,-.1){\line(1,0){.69}}
\put(-.27,-.05){\line(1,0){.84}}
\put(-.27,0){\line(1,0){1.13}}
\put(-.27,.05){\line(1,0){1.02}}
\put(-.27,.1){\line(1,0){.93}}
\put(-.27,.15){\line(1,0){.88}}
\put(-.28,.2){\line(1,0){.85}}
\put(-.28,.25){\line(1,0){.8}}
\put(-.29,.3){\line(1,0){.78}}
\put(-.3,.35){\line(1,0){.76}}
\put(-.31,.4){\line(1,0){.75}}
\put(-.32,.45){\line(1,0){.75}}
\put(-.34,.5){\line(1,0){.76}}
\put(-.35,.55){\line(1,0){.76}}
\put(-.37,.6){\line(1,0){.2}}
\put(-.39,.65){\line(1,0){.1}}
\put(-.42,.7){\line(1,0){.04}}
\put(.18,.6){\line(1,0){.23}}
\put(.3,.65){\line(1,0){.12}}
\put(.37,.7){\line(1,0){.06}}
\end{picture}}
\end{center}
The point is that the upper half plane is more congenial with regard to an 
explicit realisation of the symmetry group for which this extended tile is a 
fundamental domain. 
\begin{lemma}\label{one} The two transformations
$$T\tau\equiv\tau+1\qquad\mbox{and}\qquad U\tau\equiv\frac\tau{4\tau+1}$$
generate a group of biholomorphisms of the upper half plane ${\mathcal{H}}$, 
having ${\mathcal{D}}$ as fundamental domain.
\end{lemma}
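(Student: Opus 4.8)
The plan is to treat $\mathcal{D}$ as an ideal hyperbolic quadrilateral and to invoke Poincaré's polygon theorem, with $T$ and a companion of $U$ playing the role of side-pairing transformations. First I would note that $T$ and $U$ are the actions on $\mathcal{H}$ of the matrices $\mat{1}{1}{0}{1}$ and $\mat{1}{0}{4}{1}$ in $SL_2(\mathbb{R})$; having real entries and determinant $1$, they preserve $\mathcal{H}$ and are biholomorphic, and they generate a subgroup of $SL_2(\mathbb{Z})$, which is in particular discrete. The quadrilateral $\mathcal{D}$ has the four ideal vertices $0,\tfrac12,1,\infty$ and four sides: the two vertical lines $\mathrm{Re}\,\tau=0$ and $\mathrm{Re}\,\tau=1$, and the two semicircular geodesics $|\tau-\tfrac14|=\tfrac14$ and $|\tau-\tfrac34|=\tfrac14$.

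Next I would pin down the side-pairings. Clearly $T$ carries the left vertical side to the right one. For the arcs the natural candidate is $V\equiv TU^{-1}=\mat{-3}{1}{-4}{1}$, that is, $\tau\mapsto(3\tau-1)/(4\tau-1)$; a direct check shows that $V$ is parabolic with fixed point $\tfrac12$ and that $V(0)=1$, so $V$ carries the left arc onto the right arc. Since $V\in\langle T,U\rangle$ and conversely $U=V^{-1}T$, the group generated by the two side-pairings $T$ and $V$ is exactly $\langle T,U\rangle$.

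It then remains to check the hypotheses of Poincaré's theorem, which for an all-ideal polygon reduce to the conditions at the cusps. Tracing the vertex cycles, $\infty$ is fixed by $T$, the point $\tfrac12$ is fixed by the arc-pairing $V$, and $0$ and $1$ constitute a single cycle whose cycle transformation is $T^{-1}V=U^{-1}$. Each of the three cycles $\{\infty\}$, $\{\tfrac12\}$, $\{0,1\}$ is thus parabolic, with representatives $T$, $V$, $U^{-1}$, and there are no finite vertices, so no angle-sum condition intervenes. Poincaré's theorem then delivers at one stroke that $\langle T,U\rangle$ is discrete, that $\mathcal{D}$ is a fundamental domain, and that the group is free on the generators $T$ and $U$, the third cusp generator being $V=TU^{-1}$ — exactly as befits the quotient $\mathcal{H}/\langle T,U\rangle$, which by Section~\ref{thrice_punctured} is the thrice-punctured sphere.

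I expect the main obstacle to lie in the honest verification of Poincaré's hypotheses: that each side-pairing sends $\mathcal{D}$ to the far side of the paired edge, so that distinct translates have disjoint interiors, and that the parabolic cusp cycles close up with compatible horoball neighbourhoods, so that the translates are locally finite and genuinely tile $\mathcal{H}$. In the spirit of the present account one can instead bypass the polygon theorem altogether: the tessellation built in Section~\ref{thrice_punctured} already exhibits $\mathcal{D}$ as a fundamental domain for the deck group of the universal covering $\Delta\to S^2\setminus\{0,1,\infty\}$, and since the side-pairings of any fundamental domain generate the deck group, it suffices to confirm that $T$ and $V$ are those side-pairings — which is the computation above.
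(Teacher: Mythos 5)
Your proposal is correct, but it takes a genuinely different route from the paper's. You apply Poincar\'e's polygon theorem directly to the ideal quadrilateral ${\mathcal{D}}$ with side pairings $T$ and $V=TU^{-1}=\mat{-3}{1}{-4}{1}$, and your computations check out: $V$ is parabolic fixing $1/2$ and carries the geodesic from $0$ to $1/2$ onto the geodesic from $1/2$ to $1$; the three vertex cycles $\{\infty\}$, $\{1/2\}$, $\{0,1\}$ have parabolic cycle transformations $T$, $V$, $U^{-1}$; and, there being no finite vertices, the remaining hypotheses reduce to the far-side condition, which is immediate since $T({\mathcal{D}})\subset\{\mathrm{Re}\,\tau\geq 1\}$ and $V({\mathcal{D}})$ lies under the arc $|\tau-3/4|=1/4$ (note $V(\infty)=3/4$), and to completeness at the cusps, which in the standard formulations (Maskit, Beardon) is exactly the parabolicity you verified, so the gap you flag is easily closed. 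The paper argues differently: it exploits the tessellation already constructed in Section~\ref{thrice_punctured} by Schwarz reflection of the Riemann mapping, observing that the three hyperbolic reflections $\Pi_1\tau=-\overline\tau$, $\Pi_2\tau=1-\overline\tau$, $\Pi_3\tau=\overline\tau/(4\overline\tau-1)$ in the sides of the ideal triangle with vertices $0,1/2,\infty$ generate the tessellating group, whence its orientation-preserving index-two subgroup --- generated by $\Pi_2\circ\Pi_1=T$, $\Pi_3\circ\Pi_1=U$, and $\Pi_3\circ\Pi_2=UT^{-1}$ (which is your $V^{-1}$) --- has the doubled tile ${\mathcal{D}}$ as fundamental domain. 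What each approach buys: yours is self-contained hyperbolic geometry, independent of the Riemann-mapping construction, and yields discreteness and the freeness of $\langle T,U\rangle$ as by-products, at the cost of importing the polygon theorem, whose hypotheses carry the real content; the paper has already paid for the tessellation analytically in Section~\ref{thrice_punctured}, so only the easy algebra of composing reflections remains. Your closing alternative --- reading the side pairings of the deck group off the existing tessellation --- is essentially the paper's argument, except that the paper works with the full reflection group and passes to its holomorphic half rather than with side pairings directly.
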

\begin{proof} Regarding the ideal triangle (\ref{ideal}), the corresponding
tessellation (\ref{tessellation}) is evidently generated by the three
hyperbolic reflections in its sides. Viewed in the upper half
plane~(\ref{ideal_UHP}), these three reflections are
$$\Pi_1\tau=-\overline\tau,\qquad\Pi_2\tau=1-\overline\tau,\qquad
\Pi_3\tau=\frac{\overline\tau}{4\overline\tau-1}.$$
Therefore, the group we seek may be generated by
$\Pi_2\circ\Pi_1$, $\Pi_3\circ\Pi_1$, and $\Pi_3\circ\Pi_2$, namely
$$\tau\mapsto
\tau+1,\qquad\tau\mapsto
\frac{\tau}{4\tau+1},\qquad\tau\mapsto
\frac{\tau-1}{4\tau-3}.
$$
But these three transformations are $T$, $U$, and $UT^{-1}$. 
%
\end{proof}
It is useful to have an algebraic description of the group generated
by $T$ and~$U$. To this end, and also because we shall need some of this 
algebra for other purposes later on, we record some well-known properties of 
the following well-known group.

\subsection{The modular group} This is an alternative name for the group 
${\mathrm{SL}}(2,{\mathbb{Z}})$, of $2\times 2$ unit determinant matrices with 
integer entries. It is generated by 
$$S\equiv\mat{0}{-1}{1}{0}\quad\mbox{and}\quad T\equiv\mat{1}{1}{0}{1}.$$
Notice that 
$$S^2=-{\mathrm{Id}}=(ST)^3.$$
There is a normal subgroup 
$\{\pm{\mathrm{Id}}\}\lhd{\mathrm{SL}}(2,{\mathbb{Z}})$. The 
quotient group is denoted ${\mathrm{PSL}}(2,{\mathbb{Z}})$. It is generated by 
$S$ and $T$ subject to the relations $S^2={\mathrm{Id}}=(ST)^3$. The group 
${\mathrm{SL}}(2,{\mathbb{R}})$ acts on the upper half plane ${\mathcal{H}}$ 
according to
$$\mat{a}{b}{c}{d}\tau=\frac{a\tau+b}{c\tau+d}\,,$$
this action descending to a faithful action of
${\mathrm{PSL}}(2,{\mathbb{Z}})$. Indeed, this action identifies
${\mathrm{PSL}}(2,{\mathbb{R}})$ as the biholomorphisms
of~${\mathcal{H}}$. Having done this, the subgroup
${\mathrm{PSL}}(2,{\mathbb{Z}})$ acts properly discontinuously
on~${\mathcal{H}}$. It is easy to verify and well-known that
\begin{equation}\label{usual_modular_fun}
\raisebox{-110pt}{\setlength{\unitlength}{90pt}
\begin{picture}(3,2.15)(-1,-.1)
\thicklines\put(0,0){\makebox(0,0){$\bullet$}}
\put(-1,0){\vector(1,0){2}}
\put(0,0){\vector(0,1){2}}
\put(.5,0){\makebox(0,0){$\bullet$}}
\put(0,1){\makebox(0,0){$\bullet$}}
\put(0,-.08){\makebox(0,0){$\tau=0$}}
\put(.5,-.08){\makebox(0,0){$\tau=1/2$}}
\put(.18,.8660){\makebox(0,0){$\tau=i$}}
\put(.5,.8660){\makebox(0,0){$\bullet$}}
\put(1.03,.8660){\makebox(0,0){$\tau=1/2+\sqrt{3}i/2$}}
\linethickness{1pt}
\qbezier (0.5,2) (0.5,2) (0.5,0.8660254040)
\qbezier (0.5,0.8660254040) (0.2679491924,1) (0,1)
\qbezier (0,1) (-0.2679491924,1) (-0.5,0.8660254040)
\qbezier (-0.5,0.8660254040) (-0.5,2) (-0.5,2)
\thinlines
\put(-.5,1.89){\line(1,0){1}}
\put(-.5,1.86){\line(1,0){1}}
\put(-.5,1.83){\line(1,0){1}}
\put(-.5,1.8){\line(1,0){1}}
\put(-.5,1.77){\line(1,0){1}}
\put(-.5,1.74){\line(1,0){1}}
\put(-.5,1.71){\line(1,0){1}}
\put(-.5,1.68){\line(1,0){1}}
\put(-.5,1.65){\line(1,0){1}}
\put(-.5,1.62){\line(1,0){1}}
\put(-.5,1.59){\line(1,0){1}}
\put(-.5,1.56){\line(1,0){1}}
\put(-.5,1.53){\line(1,0){1}}
\put(-.5,1.5){\line(1,0){1}}
\put(-.5,1.47){\line(1,0){1}}
\put(-.5,1.44){\line(1,0){1}}
\put(-.5,1.41){\line(1,0){1}}
\put(-.5,1.38){\line(1,0){1}}
\put(-.5,1.35){\line(1,0){1}}
\put(-.5,1.32){\line(1,0){1}}
\put(-.5,1.29){\line(1,0){1}}
\put(-.5,1.26){\line(1,0){1}}
\put(-.5,1.23){\line(1,0){1}}
\put(-.5,1.2){\line(1,0){1}}
\put(-.5,1.17){\line(1,0){1}}
\put(-.5,1.14){\line(1,0){1}}
\put(-.5,1.11){\line(1,0){1}}
\put(-.5,1.08){\line(1,0){1}}
\put(-.5,1.05){\line(1,0){1}}
\put(-.5,1.02){\line(1,0){1}}
\put(-.5,.99){\line(1,0){.35}}
\put(-.5,.96){\line(1,0){.22}}
\put(-.5,.93){\line(1,0){.13}}
\put(-.5,.9){\line(1,0){.06}}
\put(.5,.99){\line(-1,0){.35}}
\put(.5,.96){\line(-1,0){.22}}
\put(.5,.93){\line(-1,0){.13}}
\put(.5,.9){\line(-1,0){.06}}
\end{picture}}
\end{equation}
is a fundamental domain for this action.

\subsection{Some congruence subgroups}
Let us consider the following two subgroups of 
${\mathrm{SL}}(2,{\mathbb{Z}})$.
\begin{itemize}
\item $\Gamma(4)\equiv
\left\{\mat{a}{b}{c}{d}\in{\mathrm{SL}}(2,{\mathbb{Z}})
\mid\mat{a}{b}{c}{d}\equiv\mat{1}{0}{0}{1}\bmod 4\right\}$.
\item $\Gamma_1(4)\equiv\rule{0pt}{21pt}
\left\{\mat{a}{b}{c}{d}\in{\mathrm{SL}}(2,{\mathbb{Z}})
\mid\mat{a}{b}{c}{d}\equiv\mat{1}{*}{0}{1}\bmod 4\right\}$.
\end{itemize}
It is clear that
$$\Gamma(4)\lhd{\mathrm{SL}}(2,{\mathbb{Z}})\twoheadrightarrow
{\mathrm{SL}}(2,{\mathbb{Z}}_4)$$
and easily verified that ${\mathrm{SL}}(2,{\mathbb{Z}}_4)$ has 48 elements. 
In particular, the subgroup $\Gamma(4)$ has index 48 in 
${\mathrm{SL}}(2,{\mathbb{Z}})$. Also the homomorphism
$$\Gamma_1(4)\ni\mat{a}{b}{c}{d}\longmapsto b \bmod 4\in{\mathbb{Z}}_4$$
shows that $\Gamma(4)\lhd\Gamma_1(4)$
of index~$4$. Therefore, whilst $\Gamma_1(4)$ is not a
normal subgroup of ${\mathrm{SL}}(2,{\mathbb{Z}})$, it has index 
$48/4=12$.

We may now achieve our goal of an algebraic description of the group 
generated by $T$ and~$U$.
\begin{lemma}
The subgroup of ${\mathrm{SL}}(2,{\mathbb{Z}})$ generated
by
$$\mat{1}{1}{0}{1}\quad\mbox{and}\quad\mat{1}{0}{4}{1}$$
is $\Gamma_1(4)$.
\end{lemma}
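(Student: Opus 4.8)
The plan is to prove the two inclusions separately, the forward one being immediate and the reverse one by a Euclidean-style descent. For the forward inclusion, observe that both generators lie in $\Gamma_1(4)$: reducing modulo $4$ gives $\mat{1}{1}{0}{1}$ and $\mat{1}{0}{0}{1}$, each of the required shape $\mat{1}{*}{0}{1}$. Since $\Gamma_1(4)$ is the preimage of a subgroup of ${\mathrm{SL}}(2,{\mathbb{Z}}_4)$ under reduction modulo $4$, it is itself a group, and therefore contains the whole subgroup generated by $T=\mat{1}{1}{0}{1}$ and $U=\mat{1}{0}{4}{1}$.

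For the reverse inclusion, the idea is to take an arbitrary $M=\mat{a}{b}{c}{d}\in\Gamma_1(4)$ and to left-multiply by powers of $T$ and $U$ so as to drive it to a power of $T$. Left multiplication by $T^{n}$ adds $n$ times the bottom row to the top row, sending the first column $(a,c)$ to $(a+nc,c)$; left multiplication by $U^{m}$ adds $4m$ times the top row to the bottom row, sending $(a,c)$ to $(a,c+4ma)$. Both operations preserve membership in $\Gamma_1(4)$, so throughout the reduction we retain $a\equiv1$ and $c\equiv0\bmod4$. I would induct on $|c|$. When $c=0$ the determinant forces $a=d=\pm1$, and $a\equiv1\bmod4$ pins this down to $M=\mat{1}{b}{0}{1}=T^{b}$, so $M$ lies in the subgroup. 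When $c\neq0$, I would first choose $n$ so that $T^{n}M$ has top-left entry $a'$ reduced modulo $c$, and then choose $m$ so that $U^{m}T^{n}M$ has bottom-left entry reduced modulo $4a'$.

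The heart of the matter, and the only place where anything beyond the bare Euclidean algorithm is needed, is checking that this reduction strictly decreases $|c|$. Reducing $a$ modulo $c$ only guarantees $|a'|\le|c|/2$, and reducing $c$ modulo $4a'$ only guarantees the new entry $|c'|\le 2|a'|$; naively composing these gives merely $|c'|\le|c|$, and the factor of $4$ in $U$ is exactly what threatens termination. This is rescued by parity: since $c\equiv0\bmod4$ the quantity $|c|/2$ is even, whereas $a'\equiv a\equiv1\bmod4$ is odd, so the inequality $|a'|\le|c|/2$ cannot be an equality and in fact $|a'|\le|c|/2-1$. Feeding this into $|c'|\le2|a'|$ yields $|c'|\le|c|-2<|c|$, a genuine strict decrease (and $a'\neq0$ since it is odd, so the second reduction makes sense). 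The induction therefore terminates at $c=0$, giving $WM=T^{b}$ for some word $W$ in $T^{\pm1},U^{\pm1}$, whence $M=W^{-1}T^{b}$ lies in the subgroup generated by $T$ and $U$, as required. The main obstacle to anticipate is precisely this termination subtlety; the remainder is the standard row-reduction bookkeeping.
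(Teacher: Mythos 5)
Your proof is correct, but it takes a genuinely different route from the paper. The paper argues geometrically: it computes that $\Gamma_1(4)$ has index $12$ in ${\mathrm{SL}}(2,{\mathbb{Z}})$ (via the order-$48$ count for ${\mathrm{SL}}(2,{\mathbb{Z}}_4)$ and the index-$4$ homomorphism $b\bmod 4$ on $\Gamma_1(4)/\Gamma(4)$), hence index $6$ in ${\mathrm{PSL}}(2,{\mathbb{Z}})$, and then exhibits six hyperbolic copies of a fundamental domain for ${\mathrm{PSL}}(2,{\mathbb{Z}})$ tiling the region ${\mathcal{D}}$, which Lemma~\ref{one} identifies as a fundamental domain for the group generated by $T$ and $U$; equality then follows from the containment $\langle T,U\rangle\subseteq\Gamma_1(4)$ together with equality of indices. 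Your argument is instead a purely algebraic Euclidean descent: left multiplication by $T^n$ and $U^m$ performs row operations on the first column, and you shrink $|c|$ until $c=0$, where the determinant and the congruence $a\equiv 1\bmod 4$ force the matrix to be $T^b$. You correctly isolate the one delicate point, termination despite the factor of $4$ in $U$: since $c\equiv 0\bmod 4$ the bound $|a'|\le|c|/2$ pits an odd number ($a'\equiv 1\bmod 4$) against an even one ($|c|/2$), so in fact $|a'|\le|c|/2-1$ and $|c'|\le 2|a'|\le|c|-2$, a strict decrease; your observations that $a'\neq 0$ (being odd) and that $T,U\in\Gamma_1(4)$ preserve the congruence invariants close the remaining gaps, so the induction is sound. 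As for what each approach buys: the paper's proof is of a piece with its geometric program — the tiling of ${\mathcal{D}}$ is exactly what identifies the deck group of the covering ${\mathcal{H}}\to\Sigma$ — but it leans on the index computation and on reading the tiling off a picture; your proof is self-contained, elementary, and constructive, yielding an explicit algorithm expressing any element of $\Gamma_1(4)$ as a word in $T^{\pm1},U^{\pm1}$, and it makes visible why the analogous two-generator statement is special to small level (for $\Gamma_1(N)$ with $N\ge 5$ the parity rescue fails and the corresponding subgroup is proper).
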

\begin{proof} We give a geometric proof by comparing fundamental domains. To
this end we note that  
\begin{equation}\label{unusual_modular_fun}
\raisebox{-110pt}{\setlength{\unitlength}{90pt}
\begin{picture}(3,2.15)(-1,-.1)
\thicklines\put(0,0){\makebox(0,0){$\bullet$}}
\put(-1,0){\vector(1,0){2.5}}
\put(0,0){\vector(0,1){2}}
\put(.5,0){\makebox(0,0){$\bullet$}}
\put(1,0){\makebox(0,0){$\bullet$}}
\put(0,-.08){\makebox(0,0){$\tau=0$}}
\put(.5,-.08){\makebox(0,0){$\tau=1/2$}}
\put(1,-.08){\makebox(0,0){$\tau=1$}}
\put(0,1){\makebox(0,0){$\bullet$}}
\put(.5,0.8660254040){\makebox(0,0){$\bullet$}}
\put(-.22,1){\makebox(0,0){$\tau=i$}}
\put(1,.8){\makebox(0,0){$\tau=1/2+\sqrt{3}i/2$}}
\linethickness{1pt}
\qbezier (1,2) (1,2) (1,1)
\qbezier (1,1) (0.7320508076,1) (0.5,0.8660254040)
\qbezier (0.5,0.8660254040) (0.2679491924,1) (0,1)
\qbezier (0,1) (0,2) (0,2)
\thinlines
\put(0,1.89){\line(1,0){1}}
\put(0,1.86){\line(1,0){1}}
\put(0,1.83){\line(1,0){1}}
\put(0,1.8){\line(1,0){1}}
\put(0,1.77){\line(1,0){1}}
\put(0,1.74){\line(1,0){1}}
\put(0,1.71){\line(1,0){1}}
\put(0,1.68){\line(1,0){1}}
\put(0,1.65){\line(1,0){1}}
\put(0,1.62){\line(1,0){1}}
\put(0,1.59){\line(1,0){1}}
\put(0,1.56){\line(1,0){1}}
\put(0,1.53){\line(1,0){1}}
\put(0,1.5){\line(1,0){1}}
\put(0,1.47){\line(1,0){1}}
\put(0,1.44){\line(1,0){1}}
\put(0,1.41){\line(1,0){1}}
\put(0,1.38){\line(1,0){1}}
\put(0,1.35){\line(1,0){1}}
\put(0,1.32){\line(1,0){1}}
\put(0,1.29){\line(1,0){1}}
\put(0,1.26){\line(1,0){1}}
\put(0,1.23){\line(1,0){1}}
\put(0,1.2){\line(1,0){1}}
\put(0,1.17){\line(1,0){1}}
\put(0,1.14){\line(1,0){1}}
\put(0,1.11){\line(1,0){1}}
\put(0,1.08){\line(1,0){1}}
\put(0,1.05){\line(1,0){1}}
\put(0,1.02){\line(1,0){1}}
\put(.5,.99){\line(-1,0){.35}}
\put(.5,.96){\line(-1,0){.22}}
\put(.5,.93){\line(-1,0){.13}}
\put(.5,.9){\line(-1,0){.06}}
\put(.5,.99){\line(1,0){.35}}
\put(.5,.96){\line(1,0){.22}}
\put(.5,.93){\line(1,0){.13}}
\put(.5,.9){\line(1,0){.06}}
\end{picture}}\end{equation}
is a perfectly good alternative to the usual (\ref{usual_modular_fun}) as a
fundamental domain for the action of ${\mathrm{PSL}}(2,{\mathbb{Z}})$.
Moreover, six hyperbolic copies of this alternative may be used to tile the
fundamental domain ${\mathcal{D}}$ concerning the action of Lemma~\ref{one}:
\begin{equation}\label{look_six_copies}
\raisebox{-110pt}{\setlength{\unitlength}{90pt}
\begin{picture}(3,2.15)(-1,-.1)
\thicklines\put(0,0){\makebox(0,0){$\bullet$}}
\put(-1,0){\vector(1,0){2.5}}
\put(0,0){\vector(0,1){2}}
\put(.5,0){\makebox(0,0){$\bullet$}}
\put(1,0){\makebox(0,0){$\bullet$}}
\put(0,-.08){\makebox(0,0){$\tau=0$}}
\put(.5,-.08){\makebox(0,0){$\tau=1/2$}}
\put(1,-.08){\makebox(0,0){$\tau=1$}}
\linethickness{1pt}
\qbezier (1,2) (1,2) (1,0)
\qbezier (1,1) (0.7320508076,1) (0.5,0.8660254040)
\qbezier (0.5,0.8660254040) (0.2679491924,1) (0,1)
\qbezier (0,0) (0,2) (0,2)
\qbezier (0.5,0.8660254040) (0.5,0.5) (0.5,0.2886751347) 
\qbezier (1,0) (1,0.1339745962) (0.9330127020,0.25)
\qbezier (0.9330127020,0.25) (0.8660254039,0.3660254038) (0.75,0.4330127020)
\qbezier (0.75,0.4330127020) (0.6339745962,0.5) (0.5,0.5)
\qbezier (0.5,0.5) (0.3660254038,0.5) (0.25,0.4330127020)
\qbezier (0.25,0.4330127020) (0.1339745962,0.3660254039) (0.0669872980,0.25)
\qbezier (0.0669872980,0.25) (0,0.1339745962) (0,0)
\qbezier (1,0) (1,0.06698729810) (0.9665063510,0.125)
\qbezier (0.9665063510,0.125) (0.9330127020,0.1830127019) (0.875,0.2165063510)
\qbezier (0.875,0.2165063510) (0.8169872981,0.25) (0.75,0.25)
\qbezier (0.75,0.25) (0.6830127019,0.25) (0.625,0.2165063510)
\qbezier (0.625,0.2165063510) (0.5669872981,0.1830127020) (0.5334936490,0.125)
\qbezier (0.5334936490,0.125) (0.5,0.06698729810) (0.5,0)
\qbezier (0.5,0) (0.5,0.06698729810) (0.4665063510,0.125)
\qbezier (0.4665063510,0.125) (0.4330127020,0.1830127019) (0.375,0.2165063510)
\qbezier (0.375,0.2165063510) (0.3169872981,0.25) (0.25,0.25)
\qbezier (0.25,0.25) (0.1830127019,0.25) (0.125,0.2165063510)
\qbezier (0.125,0.2165063510) (0.0669872981,0.1830127020) (0.0334936490,0.125)
\qbezier (0.0334936490,0.125) (0,0.06698729810) (0,0)
\qbezier (0.5,0.2886751347) (0.44,0.27) (0.4,0.2)  
\qbezier (0.5,0.2886751347) (0.56,0.27) (0.6,0.2)  
\end{picture}}\end{equation}
We have observed that $\Gamma_1(4)\subset{\mathrm{SL}}(4,{\mathbb{Z}})$ has 
index~$12$. It follows that 
$$\{\pm{\mathrm{Id}}\}\times\Gamma_1(4)\subset{\mathrm{SL}}(2,{\mathbb{Z}})$$
has index~$6$ and, therefore, that $\Gamma_1(4)$ may be regarded as a subgroup
of ${\mathrm{PSL}}(2,{\mathbb{Z}})$ of index~$6$. Certainly,
$$\left\langle\mat{1}{1}{0}{1},\mat{1}{0}{4}{1}\right\rangle
\subseteq\Gamma_1(4).$$
Equality follows because, as subgroups of ${\mathrm{PSL}}(2,{\mathbb{Z}})$,
they have the same index of~$6$, as~(\ref{look_six_copies}) shows.
\end{proof}
\noindent It is usual to introduce another congruence subgroup of the modular
group
$$\Gamma_0(4)\equiv\left\{\mat{a}{b}{c}{d}\in{\mathrm{SL}}(2,{\mathbb{Z}})
\mid\mat{a}{b}{c}{d}\equiv\mat{*}{*}{0}{*}\bmod 4\right\}$$
but it has already occurred in our proof above as 
$\{\pm{\mathrm{Id}}\}\times\Gamma_1(4)$. 

In summary, the group ${\mathrm{SL}}(2,{\mathbb{R}})$ acts on the upper half
plane ${\mathcal{H}}$ by
$$\mat{a}{b}{c}{d}\tau\equiv\frac{a\tau+b}{c\tau+d}.$$
The resulting homomorphism 
${\mathrm{SL}}(2,{\mathbb{R}})\to{\mathrm{Biholo}}({\mathcal{H}})$ is a double 
cover, having $\{\pm{\mathrm{Id}}\}$ as kernel. The subgroup 
$\Gamma_0(4)\subset{\mathrm{SL}}(2,{\mathbb{R}})$ descends to
$$\Gamma_1(4)\subset{\mathrm{PSL}}(2,{\mathbb{R}})
={\mathrm{Biholo}}({\mathcal{H}}),$$
which acts discontinuously and without fixed points. The resulting mapping
$${\mathcal{H}}\longrightarrow
\raisebox{-3pt}{$\Gamma_1(4)$}\raisebox{-2pt}{\large$\backslash$}
{\mathcal{H}}=\frac{\mathcal{H}}
{\Big\{\tau\sim\tau+1,\displaystyle\tau\sim\frac{\tau}{4\tau+1}\Big\}}\cong
S^2\setminus\{0,1,\infty\}\equiv\Sigma$$
is an explicit (and conformal) realisation of the universal cover of the
thrice-punctured sphere~$\Sigma$.

Note that there is still a certain amount of mystery built into this
realisation, which can be traced back to our use of the non-constructive
Riemann mapping theorem at the start of Section~\ref{thrice_punctured}. This
mystery now shows up in our having two natural local co\"ordinates near the
South Pole. On the one hand, we may write $q=e^{2\pi i\tau}$, as we did for the
twice-punctured sphere, to obtain a local holomorphic co\"ordinate~$q$
replacing $\tau\sim\tau+1$ for $\tau=s+it$ as $t\uparrow\infty$. On the other
hand, we have, by construction, the global meromorphic co\"ordinate $z$ on the
sphere with the South Pole at~$z=0$. It follow that $z$ is a holomorphic
function of $q$ near $\{q=0\}$ and vice versa. For the moment, the relationship
between $z$ and $q$ is mysterious save that various key points coincide:
$$\begin{array}{c||c|c|c}
z&0&1&\infty\\ \hline
q&0&-1&1 
\end{array}.$$
It is clear, however, that $\Sigma$ acquires a {\em projective structure\/}: a
preferred set of local co\"ordinates related by M\"obius transformations. In
fact, it is better: we have $\tau$ defined up to
${\mathrm{PSL}}(2,{\mathbb{R}})$ freedom ({\em real\/} M\"obius
transformations).

\section{Puncture repair}
The main upshot of the reasoning in
Sections~\ref{thrice_punctured}--\ref{upper_half_plane} is a realisation of the
thrice-punctured Riemann sphere $\Sigma\equiv S^2\setminus\{0,1,\infty\}$ as
the upper half plane ${\mathcal{H}}$ modulo the action of $\Gamma_1(4)$, an
explicit subgroup of ${\mathrm{Aut}}({\mathcal{H}})$ acting properly
discontinuously and without fixed points. Furthermore, it is evident from this
construction, that $\Sigma$ {\em may\/} be compactified as the Riemann sphere
(using, for example, the co\"ordinate change $q=e^{2\pi i\tau}$). In fact, an
argument due to Ahlfors and Beurling~\cite{AB} shows that there are no other
conformal compactifications.
\begin{thm}\label{AB} Suppose $M$ is a compact Riemann surface 
with\/~$\Sigma\hookrightarrow M$ a conformal isomorphism onto an open subset 
of~$M$. Then $M$ must be conformal to the Riemann sphere 
with\/~$\Sigma\hookrightarrow S^2$ the standard embedding.
\end{thm}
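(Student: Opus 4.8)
The plan is to show that the complement $M\setminus\Sigma$ consists of exactly three points; the theorem then follows with little further effort. Identify $\Sigma$ with its image in $M$. Each of the three punctures $0,1,\infty$ has, in $\Sigma$, a deleted-disc neighbourhood, giving a conformal embedding $f\colon\{0<|w|<1\}\to M$; since $M$ is compact and this end leaves every compact subset of $\Sigma$, the cluster set $K\equiv\bigcap_{n}\overline{f(\{0<|w|<1/n\})}$ is a nonempty compact subset of $M\setminus\Sigma$, and every boundary point of $\Sigma$ in $M$ arises this way. The key reduction is that if each such $K$ is a single point, then $f$ extends continuously across $w=0$, hence holomorphically by Riemann's removable-singularity theorem read in a local coordinate on $M$, and remains a local biholomorphism there.

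The heart of the matter --- and the step I attribute to Ahlfors and Beurling --- is to prove that $K$ is a single point. Fix once and for all a smooth conformal (Riemannian) metric $g$ on the compact surface $M$, of finite total area. Because $f$ is conformal and $\{0<|w|<1\}$ has infinite modulus, the family $\Gamma$ of closed curves separating $K$ from the fixed curve $f(\{|w|=1/2\})$ has extremal length zero. Testing the extremal length against $g$ gives $0=\lambda(\Gamma)\ge(\inf_{\gamma\in\Gamma}\ell_g(\gamma))^2/\operatorname{Area}_g(M)$, so there are simple closed curves $\gamma_n\in\Gamma$ encircling $K$ with $\ell_g(\gamma_n)\to0$. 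Once $\ell_g(\gamma_n)$ drops below the injectivity radius, $\gamma_n$ lies in a metric ball of radius $\ell_g(\gamma_n)/2$ and therefore bounds a disc $D_n$ of diameter at most $\ell_g(\gamma_n)$; as $\gamma_n$ separates the fixed curve from $K$, the small side $D_n$ must contain $K$, whence $\operatorname{diam}_g(K)\le\ell_g(\gamma_n)\to0$ and $K$ is a single point. I expect this extremal-length estimate --- the assertion that a conformal puncture is a metric null-set --- to be the only genuinely analytic obstacle; it plays the role that the Teichm\"uller module estimate would play if $M$ were known in advance to be the sphere.

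It remains to assemble the conclusion. Each of the three ends thus fills in to a single point, and $\Sigma$ together with these three points is an open subset of $M$; being also compact, it is closed, so by connectedness it is all of $M$. Hence $M\setminus\Sigma$ consists of exactly three points and $M$ is obtained from the genus-zero surface $\Sigma$ by filling three punctures; in particular $M$ has genus zero and, by the uniformisation of compact genus-zero Riemann surfaces, is conformal to the Riemann sphere. Under such an identification $M\cong\widehat{\mathbb C}$ the subset $\Sigma$ becomes the complement of three points, and the unique M\"obius transformation carrying those points to $0,1,\infty$ exhibits $\Sigma\hookrightarrow M$ as the standard embedding, as required.
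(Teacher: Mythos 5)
Your proposal is correct, and it rests on the same length--area principle as the paper's proof, but the implementation is genuinely different. The paper proves a purely local ``puncture repair'' statement by contradiction: it pulls back a smooth metric $\Omega(r,\theta)^2(dr^2+r^2d\theta^2)$ from a neighbourhood in $M$ to the punctured disc, notes that if the cluster set contained two or more points then the concentric circles $\{r=\epsilon\}$ would have length bounded away from zero as $\epsilon\downarrow0$, and then a single Cauchy--Schwarz estimate,
$$\int_0^\epsilon\!\!\int_0^{2\pi}\Omega^2\,d\theta\,r\,dr\;\geq\;
\frac1{2\pi}\int_0^\epsilon\Big[\int_0^{2\pi}\Omega r\,d\theta\Big]^2\frac{dr}r,$$
forces infinite area, contradicting the smooth extension of the metric across the puncture --- no extremal-length formalism, no injectivity radius, and the global assembly is left implicit. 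You run the dual form of the same estimate: vanishing extremal length of the separating family yields arbitrarily short separating curves, and a Riemannian argument (curves shorter than the injectivity radius lie in embedded metric balls and bound small discs) collapses the cluster set $K$ to a point. Your route buys an explicit quantitative collapse of $K$ and a fully spelled-out endgame (open-plus-closed connectedness, genus-zero uniformisation, M\"obius normalisation); it costs extra machinery and leaves two steps compressed, though both are routine rather than genuine gaps: (i) the claim that ``the small side $D_n$ must contain $K$'' needs a word, since separation in the punctured disc does not transfer verbatim to $M$ --- argue instead that the outer collar connects $f(\gamma_n)$ to the fixed curve, so if its germ along $f(\gamma_n)$ lay inside $D_n$ then the whole fixed curve would lie in $D_n$, impossible once $\operatorname{diam}_g D_n$ is smaller than the diameter of the fixed curve; and (ii) the three filled-in points must be shown distinct, e.g.\ by noting that the extended holomorphic map $S^2\to M$ has degree one over the embedded image of $\Sigma$ and is therefore a biholomorphism.
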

\begin{proof} In fact, this is a local result as in the following picture,
$$\setlength{\unitlength}{.7cm}
\begin{picture}(6,3)(1.1,0.45)
\linethickness{0.4mm}
\qbezier(5.8,2.0)(5.8,2.3728)(4.9799,2.6364)
\qbezier(4.9799,2.6364)(4.1598,2.9)(3.0,2.9)
\qbezier(3.0,2.9)(1.8402,2.9)(1.0201,2.6364)
\qbezier(1.0201,2.6364)(0.2,2.3728)(0.2,2.0)
\qbezier(0.2,2.0)(0.2,1.6272)(1.0201,1.3636)
\qbezier(1.0201,1.3636)(1.8402,1.1)(3.0,1.1)
\qbezier(3.0,1.1)(4.1598,1.1)(4.9799,1.3636)
\qbezier(4.9799,1.3636)(5.8,1.6272)(5.8,2.0)
\put(3,2){\makebox(0,0){$\bullet$}}
\end{picture}\hspace{-12pt}\raisebox{26pt}{\large$\cong$}\qquad
\begin{picture}(7,4)(0,1.2)
\linethickness{0.1mm}
\qbezier(5.8,2.0)(5.8,2.3728)(4.9799,2.6364)
\qbezier(4.9799,2.6364)(4.1598,2.9)(3.0,2.9)
\qbezier(3.0,2.9)(1.8402,2.9)(1.0201,2.6364)
\qbezier(1.0201,2.6364)(0.2,2.3728)(0.2,2.0)
\linethickness{0.4mm}
\qbezier(0.2,2.0)(0.2,1.6272)(1.0201,1.3636)
\qbezier(1.0201,1.3636)(1.8402,1.1)(3.0,1.1)
\qbezier(3.0,1.1)(4.1598,1.1)(4.9799,1.3636)
\qbezier(4.9799,1.3636)(5.8,1.6272)(5.8,2.0)
\linethickness{0.25mm}
\qbezier(5.8,2)(5.8,3)(4,4)
\qbezier(4,4)(2.2,5)(1,5)
\qbezier(1,5)(.2,5)(.2,2)
\qbezier(1.2,4.4)(1.7,4.5)(2.2,4.2)
\qbezier(.9,4.6)(1.7,3.8)(2.6,4.3)
\qbezier(.3,3.5)(1,3.3)(2.8,2.8)
\qbezier(2.8,2.8)(4.6,2.3)(5,3.3)
\linethickness{0.1mm}
\qbezier(.3,3.5)(2.5,4.3)(5,3.3)
\put(-.3,2.75){\makebox(0,0){U$\left\{\rule{0pt}{18pt}\right.$}}
\put(7.1,3.45){\makebox(0,0){$\left.\rule{0pt}{34pt}\right\}V\!\subset\!M$}}
\end{picture}$$
taken from~\cite{eg}. The punctured open disc is assumed to be conformally
isomorphic to the open set $U$ (but nothing is supposed concerning the boundary
$\partial{U}$ of $U$ in~$V$). We conclude that $V$ is conformally the disc and
$U\hookrightarrow V$ the punctured disc, tautologically included. To see this,
we calculate in polar co\"ordinates $(r,\theta)$ on the unit disc. We know
that there is a smooth positive function $\Omega(r,\theta)$ defined for $r>0$
so that the metric $\Omega(r,\theta)^2(dr^2+r^2d\theta^2)$ smoothly 
extends from $U$ to~$V$. We will encounter a contradiction if 
$\partial U$ contains two or more points since, in this case, the concentric 
curves $\{r=\epsilon\}$, as $\epsilon\downarrow 0$, have length bounded away 
from zero in the metric $\Omega(r,\theta)^2(dr^2+r^2d\theta^2)$. More 
explicitly, 
$$\int_0^{2\pi}\Omega(r,\theta)r\,d\theta$$
is bounded away from zero as $r\downarrow 0$. On the other hand, the area of
the region $\{0<r<\epsilon\}$ in $V$ is estimated by Cauchy-Schwarz as
$$\int_0^\epsilon\!\!\int_0^{2\pi}\Omega^2d\theta\,r\,dr\geq
\frac1{2\pi}\int_0^\epsilon\!\left[\int_0^{2\pi}\Omega\,d\theta\right]^2\!r\,dr
=\frac1{2\pi}\int_0^\epsilon\!
\left[\int_0^{2\pi}\Omega r\,d\theta\right]^2\frac{dr}r$$
and is therefore forced to be infinite.\end{proof} 

Otherwise said, there is no difference between the Riemann sphere, either 
{\em marked\/} at $\{0,1,\infty\}$ or {\em punctured\/} there. Thus, it makes
\mbox{intrinsic} sense on $\Sigma\equiv S^2\setminus\{0,1,\infty\}$ to 
consider holomorphic $1$-forms that are \mbox{restricted} from meromorphic
$1$-forms on $S^2$ with poles only at the marks. Of special interest is the
space (in traditional arcane notation)
$${\mathcal{M}}_2(\Gamma_0(4))\equiv\left\{\!\!\begin{tabular}{l}
holomorphic $1$-forms $\omega$ on $\Sigma$ extending\\ 
meromorphically to $S^2$ with, at worst,\\
only simple poles at $0,1,\infty$.
\end{tabular}\!\!\right\}.$$
\begin{thm} There is a canonical isomorphism
$${\mathcal{M}}_2(\Gamma_0(4))\cong\{(a,b,c)\in{\mathbb{C}}^3\mid a+b+c=0\}.$$
\end{thm}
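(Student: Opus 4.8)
The plan is to compute $\mathcal{M}_2(\Gamma_0(4))$ completely explicitly in the standard coordinate $z$ and then recognise the asserted isomorphism as the residue map. First I would observe that any $\omega\in\mathcal{M}_2(\Gamma_0(4))$ restricts to $\mathbb{C}\setminus\{0,1\}$ as $f(z)\,dz$ for a rational function $f$ whose only finite poles are simple ones at $z=0$ and $z=1$. A partial-fractions decomposition then yields
$$f(z)=\frac{a}{z}+\frac{b}{z-1}+g(z),$$
where $a,b\in\mathbb{C}$ are the residues of $\omega$ at $0$ and $1$, and $g$ is entire and rational, hence a polynomial.

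Next I would examine the behaviour at $\infty$ in the coordinate $w=1/z$, under which $dz=-w^{-2}\,dw$. This is the one place where the distinction between $1$-forms and functions genuinely matters: the factor $w^{-2}$ adds two orders of pole, so a term $z^{k}$ in $f$ contributes a pole of order $k+2$ at $w=0$. Demanding that $\omega$ have at worst a \emph{simple} pole at $\infty$ therefore forces $g\equiv 0$ (even a nonzero constant is excluded) and shows that
$$\omega=\Bigl(\frac{a}{z}+\frac{b}{z-1}\Bigr)\,dz,\qquad \mathrm{Res}_\infty\,\omega=-(a+b).$$
Thus $\mathcal{M}_2(\Gamma_0(4))$ is exactly the two-parameter family indexed by $(a,b)\in\mathbb{C}^2$.

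The canonical isomorphism is then the residue map
$$\omega\longmapsto(\mathrm{Res}_0\,\omega,\ \mathrm{Res}_1\,\omega,\ \mathrm{Res}_\infty\,\omega)=(a,b,-(a+b)),$$
which is manifestly independent of any choices. That its image lands in the hyperplane $a+b+c=0$ is immediate from the residue theorem on $S^2$ (the residues of a meromorphic $1$-form on a compact Riemann surface sum to zero). Surjectivity onto the hyperplane and injectivity both follow from the explicit description of the previous step, since $(a,b)$ determines and is determined by $\omega$.

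I expect the only real subtlety to be the bookkeeping at $\infty$ in the second step: one must remember that $\omega$ is a $1$-form, so the innocent-looking option of adding a polynomial (or even a constant) to $f$ is in fact ruled out because $dz$ already has a double pole there. Everything else is routine. As a sanity check on the dimension, one could instead invoke Riemann--Roch for the divisor $0+1+\infty$ on the genus-zero surface $S^2$, which predicts $\dim\mathcal{M}_2(\Gamma_0(4))=2$, matching the dimension of the hyperplane $\{a+b+c=0\}$.
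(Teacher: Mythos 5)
Your proposal is correct and is essentially the paper's proof: the canonical isomorphism is the residue map $\omega\mapsto(\mathrm{Res}_{z=0}\,\omega,\mathrm{Res}_{z=1}\,\omega,\mathrm{Res}_{z=\infty}\,\omega)$, with $a+b+c=0$ coming from the Residue Theorem. The paper leaves bijectivity implicit, whereas you verify it explicitly by partial fractions and the coordinate change $w=1/z$ (correctly noting that $dz$ contributes a double pole at $\infty$, killing any polynomial part) --- a worthwhile detail, but not a different method.
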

\begin{proof} The isomorphism is given by
$$\omega\longmapsto({\mathrm{Res}}_{z=0\,}\omega,
{\mathrm{Res}}_{z=1\,}\omega,{\mathrm{Res}}_{z=\infty\,}\omega),$$
with $a+b+c=0$ being a consequence of the Residue Theorem.
\end{proof}
In particular, there is the special meromorphic $1$-form
$$\frac{dz}{z},\enskip\mbox{holomorphic save for}
\enskip\Big\{\!\begin{array}{l}
\mbox{simple poles only at $0$ and $\infty$},\\
\mbox{residue}=1\mbox{ at }0.\end{array}$$

%
%
%
%
\section{Automorphisms of the thrice-punctured sphere} 
By the Ahlfors-Beurling Theorem, automorphisms of $\Sigma$ correspond to
permutations of $\{0,1,\infty\}$ and there are two particular ones that we 
shall find useful. Firstly, since
$$\mat{0}{-1/2}{2}{0}\mat{a}{b}{c}{d}
=\mat{d}{-c/4}{-4b}{a}\mat{0}{-1/2}{2}{0},$$
it follows that
\begin{equation}\label{auto1}\tau\mapsto-1/{4\tau}\end{equation}
induces an automorphism of~$\Sigma$. In the $z$-co\"ordinate, it is the one
that swops $0$ and $\infty$ but fixes~$1$, namely $z\mapsto 1/z$. 

Secondly, since
$$\mat{1}{1/2}{0}{1}\mat{a}{b}{c}{d}
=\mat{a+c/2}{b+(d-a)/2-c/4}{c}{d-c/2}\mat{1}{1/2}{0}{1},$$
it follows that
\begin{equation}\label{auto2}\tau\mapsto\tau+1/2\end{equation}
is the automorphism of $\Sigma$ that swops $z=1$ and $z=\infty$ whilst
fixing~$0$. Close to $q=0$, we recognise it as $q\mapsto -q$. In the
$z$-co\"ordinate, it is
$$z\mapsto z/(z-1).$$

\section{The normal distribution}
At this point, rather bizarrely, it is useful to discuss the normal 
distribution
$$f(x)\equiv e^{-\pi x^2}$$
and its well-known invariance under the Fourier transform
$$\widehat{f}(\xi)\equiv\int_{-\infty}^\infty f(x)e^{-2\pi\xi x}dx 
= e^{-\pi\xi^2}.$$
More generally, integration by substitution shows that 
\begin{equation}\label{FT}f(x)=e^{-2\pi tx^2}\Longrightarrow 
\widehat{f}(\xi)=\frac1{\sqrt{2t}}e^{-(\pi/2t)\xi^2}\end{equation}
for any $t>0$. The Poisson summation formula says that 
$$\sum_{n\in{\mathbb{Z}}}f(n)=\sum_{n\in{\mathbb{Z}}}\widehat{f}(n)$$
for $f:{\mathbb{R}}\to{\mathbb{R}}$ a suitably well-behaved function (for 
example, one that lies in Schwartz space). For $f(x)=e^{-\pi tx^2}$, as in 
(\ref{FT}), we find that
\begin{equation}\label{magic}\sum_{n\in{\mathbb{Z}}}e^{-2\pi t n^2}
=\frac1{\sqrt{2t}}\sum_{n\in{\mathbb{Z}}}e^{-(\pi/2t)n^2}.\end{equation}

\section{A miracle}
An outrageous suggestion is to view the formal power series~(\ref{theta}) as
defining a holomorphic function of the complex variable~$q$ (now called {\em
Jacobi's theta function}). Clearly, it is convergent for $\{|q|<1\}$. Hence,
setting $q=e^{2\pi i \tau}$, we obtain a holomorphic function of $\tau$ for
$\tau\in{\mathcal{H}}$. Then a miracle occurs: 
\begin{thm}\label{miracle} For
$\tau\in{\mathcal{H}}$, we have
$$(\theta(-1/4\tau))^4=-4\tau^2(\theta(\tau))^4.$$
Equivalently, if we define $\phi:{\mathcal{H}}\to{\mathcal{H}}$ by 
$$\phi(\tau)\equiv -1/4\tau$$
and consider the holomorphic $1$-form
$\Theta\equiv(\theta(\tau))^4d\tau,$
then
\begin{equation}\label{theta_transformation}
\phi^*\Theta=-\Theta.\end{equation}
\end{thm}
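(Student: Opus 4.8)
The plan is to prove the transformation law $(\theta(-1/4\tau))^4=-4\tau^2(\theta(\tau))^4$ by establishing a more fundamental relation for $\theta$ itself, namely a functional equation of the form $\theta(-1/4\tau)=\sqrt{-2i\tau}\,\theta(\tau)$, and then raising it to the fourth power. The key tool is already sitting in the preceding section: the Poisson summation identity~(\ref{magic}). First I would set $q=e^{2\pi i\tau}$ so that $\theta(\tau)=\sum_{n\in{\mathbb{Z}}}e^{2\pi i\tau n^2}$, and observe that identity~(\ref{magic}) is exactly the statement $\theta(it)=\frac1{\sqrt{2t}}\,\theta(i/4t)$ for real $t>0$ once one matches $e^{-2\pi tn^2}=e^{2\pi i(it)n^2}$ on the left and recognises $e^{-(\pi/2t)n^2}=e^{2\pi i(i/4t)n^2}$ on the right. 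This pins down the desired functional equation along the positive imaginary axis $\tau=it$.

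Having the identity on the imaginary axis, the second step is analytic continuation. Both $\theta(-1/4\tau)^4$ and $-4\tau^2\,\theta(\tau)^4$ are holomorphic functions of $\tau$ on the upper half plane~${\mathcal{H}}$: the series for $\theta$ converges locally uniformly on $\{|q|<1\}$ and hence defines a holomorphic function of $\tau\in{\mathcal{H}}$, and $\phi(\tau)=-1/4\tau$ maps ${\mathcal{H}}$ to itself, so the composition is holomorphic as well. Since the two sides agree on the positive imaginary axis, which has a limit point in~${\mathcal{H}}$, the identity theorem forces them to agree everywhere on~${\mathcal{H}}$. This is precisely where raising to the fourth power is convenient, since it disposes of any sign or branch ambiguity in $\sqrt{-2i\tau}$; I would still need to check the constant $-4$ rather than $+4$, which amounts to evaluating $(\sqrt{-2i\cdot it})^4=(2t)^2$ against the factor $-4(it)^2=4t^2$ and seeing they match with the correct sign.

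For the equivalent reformulation in terms of the $1$-form $\Theta=(\theta(\tau))^4\,d\tau$, I would simply compute the pullback: under $\phi(\tau)=-1/4\tau$ we have $d\phi=\frac{1}{4\tau^2}\,d\tau$, so
\begin{equation*}
\phi^*\Theta=(\theta(-1/4\tau))^4\,d\!\left(-\tfrac1{4\tau}\right)
=\bigl(-4\tau^2(\theta(\tau))^4\bigr)\cdot\frac{1}{4\tau^2}\,d\tau=-(\theta(\tau))^4\,d\tau=-\Theta,
\end{equation*}
which gives~(\ref{theta_transformation}) directly from the first displayed identity. The factor $1/(4\tau^2)$ from the Jacobian cancels the $-4\tau^2$ from the transformation law, leaving the clean sign $-1$; this is the conceptual payoff that motivates packaging the weight-$2$ transformation behaviour as a $1$-form.

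The main obstacle is the first step: correctly deriving the functional equation from Poisson summation and, in particular, getting the scaling factor and its branch right. The bookkeeping between $t$ and $\tau=it$, and tracking how $\sqrt{2t}$ becomes $\sqrt{-2i\tau}$ upon substitution, is the delicate part, though the fourth power mercifully absorbs the sign ambiguity in the square root. By contrast, the analytic continuation and the $1$-form computation are routine once the imaginary-axis identity is in hand.
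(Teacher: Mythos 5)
Your proposal is correct and follows essentially the same route as the paper: both apply the Poisson summation identity~(\ref{magic}) along the imaginary axis $\tau=it$ to obtain $\theta(-1/4\tau)=\sqrt{-2i\tau}\,\theta(\tau)$ there, raise to the fourth power to kill the branch ambiguity, and extend to all of ${\mathcal{H}}$ by analytic continuation, with the $1$-form reformulation following from the Jacobian $d(-1/4\tau)=\frac{1}{4\tau^2}\,d\tau$. Your sign and constant bookkeeping, including $(-2i\tau)^2=-4\tau^2$, checks out.
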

\begin{proof} When $\tau$ lies on the imaginary axis, i.e.~$\tau=it$ for $t>0$,
$$\theta(\tau)=\sum_{n\in{\mathbb{Z}}}q^{n^2}
=\sum_{n\in{\mathbb{Z}}}e^{-2\pi tn^2}$$
whilst 
$$\theta(-1/4\tau)=\sum_{n\in{\mathbb{Z}}}e^{-2\pi(1/4t)n^2}
=\sum_{n\in{\mathbb{Z}}}e^{-(\pi/2t)n^2}$$
so (\ref{magic}) says that 
$$\theta(-1/4\tau)=\sqrt{-2i\tau}\theta(\tau),\quad\mbox{whence}\quad
(\theta(-1/4\tau))^4=-4\tau^2(\theta(\tau))^4,$$
along the imaginary axis. The transformation (\ref{theta_transformation}) now
holds for all $\tau\in{\mathcal{H}}$ by analytic continuation.
\end{proof}
Notice that the transformation $\phi$
has already made its appearance (\ref{auto1}) as inducing an automorphism
of~$\Sigma$, the thrice-punctured sphere. If we also introduce
$T:{\mathcal{H}}\to{\mathcal{H}}$ by
$$T(\tau)\equiv\tau+1,$$
then it is clear that
$T^*\theta=\theta$ and $T^*d\tau=d\tau$. Hence, we see that
\begin{equation}\label{T-invariance}T^*\Theta=\Theta.\end{equation}
Finally, to obtain a geometric interpretation of (\ref{theta_transformation})
we note that
$$R\equiv\phi\circ T^{-1}\circ\phi$$
is given by 
$$R(\tau)=\frac{\tau}{4\tau+1}$$
and recall that $R$ and $T$ together generate $\Gamma_0(4)$. Note that
$R^*\Theta=\Theta$ in accordance with (\ref{theta_transformation})
and~(\ref{T-invariance}). Putting all this together, we have proved the
following. 
\begin{thm} The holomorphic $1$-form
$\Theta\equiv(\theta(\tau))^4d\tau$ descends to the thrice-punctured
sphere~$\Sigma$ and, under the automorphism $\phi:\Sigma\to\Sigma$, satisfies
$\phi^*\Theta=-\Theta$.
\end{thm}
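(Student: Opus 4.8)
The plan is to assemble the statement from three ingredients already established: the transformation law $\phi^*\Theta=-\Theta$ of the miracle Theorem, the $T$-invariance~(\ref{T-invariance}), and the realisation of $\Sigma$ as $\Gamma_1(4)\backslash\mathcal{H}$ with $\Gamma_1(4)=\langle T,R\rangle$ inside $\mathrm{PSL}(2,\mathbb{R})$. First I would verify that $\Theta=(\theta(\tau))^4\,d\tau$, which is manifestly a holomorphic $1$-form on all of $\mathcal{H}$ since $\theta$ is holomorphic there, is invariant under the entire deck group $\Gamma_1(4)$. Because pullback is contravariantly functorial, invariance under a generating set implies invariance under the whole group, so it suffices to treat the generators $T$ and $R$.

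The form is $T$-invariant by~(\ref{T-invariance}), so the only real computation is $R$-invariance. Writing $R=\phi\circ T^{-1}\circ\phi$ and using $(f\circ g)^*=g^*\circ f^*$ gives
$$R^*\Theta=\phi^*(T^{-1})^*\phi^*\Theta.$$
Substituting $\phi^*\Theta=-\Theta$ for the inner occurrence and $(T^{-1})^*\Theta=\Theta$ in the middle, the two sign changes contributed by the outer and inner copies of $\phi$ cancel and leave $R^*\Theta=\Theta$. This cancellation is the crux of the argument: the \emph{anti}-invariance of $\Theta$ under $\phi$ is exactly what conspires with its $T$-invariance to yield honest invariance under $R$, and therefore under all of $\Gamma_1(4)$.

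Granting $\Gamma_1(4)$-invariance, the descent is automatic. Since $\Gamma_1(4)$ acts properly discontinuously and without fixed points, the projection $\mathcal{H}\to\Sigma$ is a holomorphic covering, and an invariant holomorphic $1$-form descends to a unique holomorphic $1$-form on the quotient $\Sigma$. It remains to observe that $\phi$ itself descends to an automorphism of $\Sigma$: this is precisely~(\ref{auto1}), whose matrix identity shows that conjugation by $\mat{0}{-1/2}{2}{0}$ carries $\Gamma_0(4)$ to itself, so $\phi$ normalises the deck group and induces a well-defined involution of $\Sigma$. As pullback along $\phi$ commutes with the covering projection, the relation $\phi^*\Theta=-\Theta$ descends verbatim from $\mathcal{H}$, where the miracle Theorem supplies it, to $\Sigma$.

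The principal obstacle is conceptual rather than computational: $\phi$ is a priori only a biholomorphism of $\mathcal{H}$ and does not lie in $\Gamma_1(4)$, so its descent to $\Sigma$ depends on it \emph{normalising} the deck group rather than belonging to it. Once the normalisation recorded at~(\ref{auto1}) is invoked, the remainder is purely formal bookkeeping with pullbacks.
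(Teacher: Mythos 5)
Your proposal is correct and follows essentially the same route as the paper: the paper likewise combines $\phi^*\Theta=-\Theta$ from the miracle theorem with the $T$-invariance~(\ref{T-invariance}), writes $R=\phi\circ T^{-1}\circ\phi$ so that the two sign changes cancel to give $R^*\Theta=\Theta$, and concludes descent since $T$ and $R$ generate the deck group, with $\phi$ descending to $\Sigma$ via the normalisation computation at~(\ref{auto1}). Your explicit remark that $\phi$ merely \emph{normalises} the deck group rather than belonging to it is a point the paper leaves implicit, but it is the same argument.
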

\begin{cor} In the usual $z$-co\"ordinate on the thrice-punctured sphere,
$$\Theta=\frac{dz}{2\pi iz}.$$
\end{cor}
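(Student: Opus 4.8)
The plan is to recognise $\Theta$ as an element of the space $\mathcal{M}_2(\Gamma_0(4))$ and then to pin it down completely by its residues, comparing against the distinguished form $dz/z$. We already know from the preceding theorem that $\Theta$ is holomorphic on $\Sigma$ and satisfies $\phi^*\Theta=-\Theta$, so the first task is to check that $\Theta$ extends meromorphically across the three marks $0,1,\infty$ with at worst simple poles there. Granting this membership, the canonical residue isomorphism $\mathcal{M}_2(\Gamma_0(4))\cong\{a+b+c=0\}$ reduces the entire corollary to a single residue computation together with the symmetry $\phi^*\Theta=-\Theta$.

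First I would compute the residue at $z=0$. Near the South Pole the function $q=e^{2\pi i\tau}$ is a local holomorphic coordinate and $d\tau=dq/(2\pi i q)$. Since $(\theta(\tau))^4=1+8q+\cdots$ from the Introduction, we obtain
$$\Theta=(1+8q+\cdots)\frac{dq}{2\pi iq}=\frac1{2\pi i}\Big(\frac1q+8+\cdots\Big)dq,$$
so $\Theta$ has a simple pole at $q=0$ with residue $1/(2\pi i)$. Because residues are unchanged under biholomorphic changes of coordinate, and $z$ and $q$ both vanish to first order at the South Pole, this gives $\mathrm{Res}_{z=0}\Theta=1/(2\pi i)$. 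Next I would exploit $\phi^*\Theta=-\Theta$: the automorphism $\phi\colon z\mapsto1/z$ interchanges $0$ and $\infty$ and fixes $1$, and pullback by a biholomorphism preserves residues in the sense $\mathrm{Res}_{z=p}(\phi^*\Theta)=\mathrm{Res}_{z=\phi(p)}\Theta$. Taking $p=1$ forces $\mathrm{Res}_{z=1}\Theta=-\mathrm{Res}_{z=1}\Theta$, hence $\mathrm{Res}_{z=1}\Theta=0$, while $p=0$ gives $\mathrm{Res}_{z=\infty}\Theta=-\mathrm{Res}_{z=0}\Theta=-1/(2\pi i)$. The residue triple of $\Theta$ is therefore $\big(\tfrac1{2\pi i},0,-\tfrac1{2\pi i}\big)$, which sums to zero and is exactly $1/(2\pi i)$ times that of $dz/z$. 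By injectivity of the residue isomorphism, $\Theta=\frac1{2\pi i}\frac{dz}{z}$, as claimed.

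The one step needing real care — and the main obstacle — is the membership $\Theta\in\mathcal{M}_2(\Gamma_0(4))$, i.e.\ that the pole at each mark is at worst simple. At $z=0$ this is the computation above, and at $z=\infty$ it follows by transporting that computation through the biholomorphism $\phi$, which preserves pole order. The genuinely separate case is $z=1$, the cusp $\tau=1/2$ fixed by $\phi$: here the expansion about $q=0$ does not reach, and $\phi$-anti-invariance by itself only forces the pole order to be \emph{even} (a local computation with $\phi\colon u\mapsto-u+\cdots$ in the coordinate $u=z-1$), so it does not on its own exclude a double pole. What is required is that $(\theta(\tau))^4$ stays bounded at $\tau=1/2$ in the appropriate local uniformiser — equivalently, the classical fact that $(\theta(\tau))^4$ is a \emph{holomorphic} modular form of weight $2$ for $\Gamma_0(4)$. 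Once this boundedness is secured, the pole at $z=1$ is at worst simple, whereupon the vanishing residue found above upgrades it to no pole at all, in full agreement with $dz/z$.
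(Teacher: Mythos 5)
Your architecture is essentially the paper's: compute the residue at $z=0$ from the $q$-expansion, transport it to $z=\infty$ via $\phi^*\Theta=-\Theta$, treat $z=1$ separately, and then pin $\Theta$ down by the rigidity of meromorphic $1$-forms on $S^2$ with at worst simple poles at the marks. Two of your moves differ in detail, both soundly: you force $\mathrm{Res}_{z=1}\Theta=0$ from anti-invariance at the fixed point of $\phi$ (and your local computation that anti-invariance permits only \emph{even} pole orders there is correct), whereas the paper obtains this vanishing at the end from the residue theorem; and you conclude via injectivity of the canonical isomorphism $\mathcal{M}_2(\Gamma_0(4))\cong\{(a,b,c)\mid a+b+c=0\}$, where the paper argues directly that a form with precisely two simple poles can have no zeros and is thereby determined. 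These are cosmetic variants of the same rigidity statement.

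The one substantive divergence is exactly where you flag it: the bound at the cusp $z=1$. You correctly isolate what is needed --- that $(\theta(\tau))^4$ stays bounded in the local uniformiser at $\tau=1/2$, i.e., holomorphy of the weight-$2$ form at this cusp --- but you then cite this as ``the classical fact,'' which in the context of this paper begs the question: the development is designed to avoid importing the standard cusp analysis, and this boundedness is precisely the analytic content of the paper's proof of the corollary. The paper's argument runs as follows: using the automorphism $\psi:\tau\mapsto\tau+1/2$, it compares $\Theta$ along $\{\tau=1/2+it\}$ with $\Theta$ along $\{\tau=it\}$, noting that both restrictions are real and that the non-negativity of the coefficients in $1+8q+24q^2+\cdots$ makes $\Theta(1/2+it)$ dominated by $\Theta(it)$ as $t\to 0^+$, while an essential singularity is excluded because any semicircle centred at $\tau=1/2$ meets a suitable fundamental domain in a finite arc, on which $\Theta$ is controlled by its values on the imaginary axis; hence the behaviour at $z=1$ is no worse than at $z=0$. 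If you replace your citation with this domination argument (or any self-contained proof of boundedness at that cusp), your proof is complete; as written, the single nontrivial estimate is asserted rather than proved.
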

\begin{proof} {From} $q=e^{2\pi i\tau}$ we see that $dq=2\pi i qd\tau$ and so
$$\Theta=\frac1{2\pi iq}\left(1+8q+24q^2+32q^3+\cdots\right)dq$$
near $q=0$ and, in particular, meromorphically extends through~$q=0$, having a
simple pole there with residue $1/2\pi i$. This is a co\"ordinate-free 
statement and so 
also applies in the $z$-co\"ordinate:
$$\Theta=\frac1{2\pi iz}\left(1+\cdots\right)dz.$$
Recall that in the $z$-co\"ordinate, the automorphism $\phi$ interchanges $z=0$
with $z=\infty$ whilst fixing $z=1$. The relation $\phi^*\Theta=-\Theta$,
implies that $\Theta$ also has a pole at $z=\infty$ with residue $-1/2\pi i$.
Finally, the behaviour of $\Theta$ at $z=1$ may be investigated by means of the
automorphism (\ref{auto2}), let us call it~$\psi$, which swops $z=1$ and
$z=\infty$ whilst fixing $z=0$. In particular, we may easily compare $\Theta/i$
along the imaginary $\tau$-axis $\{\tau=it\}$ with its behaviour along the
translated axis $\{\tau=1/2+it\}$:
$$\begin{array}{rcl}\Theta/i&=&\left(1+8q+24q^2+32q^3+\cdots\right)dt\\[5pt]
\psi^*\Theta/i&=&\left(1-8q+24q^2-32q^3+\cdots\right)dt\end{array}
\quad\mbox{where }q=e^{-2\pi t}.$$
It is clear that $\Theta(it)$ has only a simple pole at $t=0$. But
$\Theta(\tau)$ is real-valued when $\mathrm{Re}(\tau)=0$ or
$\mathrm{Re}(\tau)=1/2$, and the $q$-expansion coefficients are all
non-negative, so $\Theta(1/2+it)$ is dominated by $\Theta(it)$ as $t\to 0^{+}$.
The possibility of an essential singularity is excluded by the observation that
the intersection of any semicircle centred at $\tau=1/2$ with an appropriately
chosen fundamental domain containing $\{1/2+it \mid t\geq 0\}$ is a finite
curve, so the maximal value of $\Theta(\tau)$, as $\tau$ runs along the
semicircle, is bounded by $\Theta(is)$ for some real $s$. So the behaviour of
$\Theta$ at $z=1$ is certainly no worse than the behaviour at $z=0$.

In summary, the holomorphic $1$-form $\Theta$ on $\Sigma\equiv
S^2\setminus\{0,1,\infty\}$ enjoys a meromorphic extension to $S^2$ with
\begin{itemize}
\item a simple pole at $z=0$ with residue $1/2\pi i$,
\item a simple pole at $z=\infty$ with residue $-1/2\pi i$,
\item at worse at simple pole at $z=1$.
\end{itemize}
By the residue theorem, the sum of the residues of any meromorphic 
$1$-form on any Riemann surface is zero. It follows that $\Theta$ has poles 
only at $z=0$ and $z=\infty$. Having identified precisely two poles, it cannot 
have any zeros. At this point $\Theta$ is determined as stated.   
\end{proof}

\section{An Eisenstein series}
Introduce
$$G_4(\tau)\equiv
\sum_{(c,d)\in{\mathbb{Z}}^2\setminus\{(0,0)\}}\frac1{(c\tau+d)^4}$$
and, by absolute convergence, observe that
\begin{equation}\label{G4transformation}
G_4\Big(\frac{a\tau+b}{c\tau+d}\Big)=(c\tau+d)^4G_4(\tau),\quad
\mbox{for}\enskip\mat{a}{b}{c}{d}\in{\mathrm{SL}}(2,{\mathbb{Z}}).
\end{equation}
\begin{thm}\label{G4series}
\begin{equation}\label{expandG4}
 G_4(q)=\frac{\pi^4}{45}
\Big(1+240\sum_{n=1}^\infty\sigma_3(n)q^n\Big),\end{equation}
where $\sigma_3(n)\equiv\sum_{d|n}d^3$ (and recall that $q=e^{2\pi i\tau}$). 
\end{thm}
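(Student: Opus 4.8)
The plan is to compute the Fourier expansion directly from the definition, separating the lattice sum according to whether $c=0$ or $c\neq 0$ and feeding the $c\neq 0$ terms into the Corollary~(\ref{another_rite}). First I would record that the defining double sum converges absolutely for $\tau\in{\mathcal{H}}$ (the exponent $4$ exceeds $2$), so that I am free to rearrange it as an iterated sum and to group terms as convenient; this is the one analytic fact that underwrites every manipulation below.

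Splitting off the terms with $c=0$ leaves $\sum_{d\neq 0}d^{-4}=2\zeta(4)$, which I would record as $\pi^4/45$ using the classical value $\zeta(4)=\pi^4/90$; this supplies the constant term. For the remaining terms I would pair the index $c$ with $-c$: replacing $d$ by $-d$ shows $\sum_d(c\tau+d)^{-4}=\sum_d(-c\tau+d)^{-4}$, since the fourth power is insensitive to an overall sign, so that $\sum_{c\neq 0}=2\sum_{c\geq 1}$. For each fixed $c\geq 1$ one has $\mathrm{Im}(c\tau)>0$, so applying~(\ref{another_rite}) with $\tau$ replaced by $c\tau$ gives $\sum_d(c\tau+d)^{-4}=\frac{8\pi^4}{3}\sum_{m\geq 1}m^3q^{cm}$, where $q^{c}=e^{2\pi i c\tau}$.

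Assembling these contributions, the non-constant part of $G_4$ becomes $\frac{16\pi^4}{3}\sum_{c\geq 1}\sum_{m\geq 1}m^3q^{cm}$. I would then reindex by $n=cm$: collecting the coefficient of $q^n$ gathers exactly the terms with $c\mid n$, and writing $e=n/c$ turns the inner sum into $\sum_{e\mid n}e^3=\sigma_3(n)$. Hence $G_4=\frac{\pi^4}{45}+\frac{16\pi^4}{3}\sum_{n\geq 1}\sigma_3(n)q^n$, and the stated normalisation follows from the arithmetic identity $\frac{16\pi^4}{3}=240\cdot\frac{\pi^4}{45}$.

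The genuinely substantive input is the Corollary, which has already been established; once it is available, the remaining steps are bookkeeping. I therefore expect the only real (and minor) obstacle to be a matter of care rather than of difficulty: justifying the interchange of summation that permits the termwise application of~(\ref{another_rite}), which is precisely what the absolute convergence of the original double sum licenses, together with correctly tracking the constant term, for which I rely on the classical evaluation $\zeta(4)=\pi^4/90$.
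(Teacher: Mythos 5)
Your proposal is correct and follows essentially the same route as the paper's proof: split off the $c=0$ terms as $2\zeta(4)=\pi^4/45$, use the symmetry $c\mapsto -c$ to reduce to $c\geq 1$, apply~(\ref{another_rite}) with $\tau$ replaced by $c\tau$, and reindex by $n=cm$ to produce $\sigma_3(n)$. Your explicit attention to absolute convergence and to the normalisation $\frac{16\pi^4}{3}=240\cdot\frac{\pi^4}{45}$ merely makes precise what the paper's computation leaves implicit.
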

\begin{proof} This is a straightforward application of (\ref{another_rite}):
\begin{align*}
G_{4}(\tau)
&=\!\sum_{\substack{d=-\infty\\d\neq 0}}^{\infty}\frac{1}{d^4}
 +\!\!\sum_{\substack{c=-\infty\\c\neq 0}}^{\infty}
  \sum_{d=-\infty}^{\infty}\frac{1}{(c\tau+d)^4}
=2\zeta(4)
 +2\sum_{c=1}^{\infty}\sum_{d=-\infty}^{\infty}\frac{1}{(c\tau+d)^4}\\
&=\frac{\pi^4}{45}
 +2\sum_{c=1}^{\infty}
   \left(\frac{8\pi^4}{3}\sum_{m=1}^{\infty}m^3 e^{2\pi i cm\tau}\right)
\quad\mbox{(from (\ref{another_rite}))}\\
&=\frac{\pi^4}{45}
\left(1+240\sum_{m=1}^{\infty}\sigma_{3}(m)e^{2\pi i m\tau}\right).\qedhere
\end{align*}
\end{proof}
Following Ramanujan, let
\begin{equation}\label{M}M(q)\equiv 1+240\sum_{n=1}^\infty\sigma_3(n)q^n
\end{equation}
and, as a consequence of (\ref{G4transformation}) and (\ref{expandG4}), observe
that
\begin{equation}\label{Mtransformation}
M(\tau+1)=M(\tau)\quad\mbox{and}\quad M(-1/\tau)=\tau^4M(\tau).\end{equation}

\section{The Ramanujan ODE} 
Following Ramanujan, let
\begin{equation}\label{L}
L(q)\equiv 1-24\sum_{n=1}^\infty\sigma(n)q^n
\end{equation}
defined for $\{|q|<1\}$. The following identity was proved by Ramanujan
\cite[identities (17), (27), (28), and (30)]{R}, as a corollary of his
straight\-forward but inspired proof of a certain identity between Lambert
series. These Lambert series identities were elucidated by van~der~Pol
\cite{V}, who showed that they ultimately derive from the product formula and
transformation formula for Jacobi's theta function. A direct combinatorial
proof is due to Skoruppa~\cite{S}. \begin{thm} As (formal) power series,
\begin{equation}\label{RSode}12q\frac{dL}{dq}-L^2+M=0.\end{equation}
\end{thm}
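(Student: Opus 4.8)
The plan is to recognise the two power series as the classical Eisenstein series $L=E_2$ and $M=E_4$, and to show that the left-hand side of~(\ref{RSode}), namely $\Phi(\tau)\equiv 12q\,\frac{dL}{dq}-L^2+M$, is a holomorphic modular form of weight~$4$ for ${\mathrm{SL}}(2,{\mathbb{Z}})$ that vanishes at the cusp $q=0$. Since there are no nonzero such forms, $\Phi$ must vanish identically, which is exactly~(\ref{RSode}). The constant term of $\Phi$ is $0-1+1=0$, so vanishing at the cusp is immediate once we know $\Phi$ extends holomorphically there, which it does because $L$, $M$, and $q\,\frac{dL}{dq}$ are honest power series in $q=e^{2\pi i\tau}$.

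First I would record the behaviour of the three constituents under the generators $T\tau=\tau+1$ and $S\tau=-1/\tau$ of the modular group. Invariance under $T$ is trivial, since everything is a power series in $q$, giving $\Phi(\tau+1)=\Phi(\tau)$, consistent with weight~$4$. The behaviour of $M$ under $S$ is already in hand, namely $M(-1/\tau)=\tau^4M(\tau)$ from~(\ref{Mtransformation}). The one missing ingredient is the \emph{quasimodular} transformation of $L=E_2$,
$$L(-1/\tau)=\tau^2L(\tau)+\frac{6\tau}{\pi i},$$
which I would establish separately, for instance by taking the logarithmic derivative of the weight-$12$ identity $\Delta(-1/\tau)=\tau^{12}\Delta(\tau)$ for the discriminant, or by Hecke's convergence-factor trick applied to the conditionally convergent Eisenstein sum for $E_2$.

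The crux is then a direct computation showing that the anomalous, non-modular terms cancel. Writing $c=6/\pi i$ and differentiating $L(-1/\tau)=\tau^2L+c\tau$ yields the transformation of $q\,\frac{dL}{dq}=\frac1{2\pi i}L'$, which carries a compensating anomaly, while squaring $L(-1/\tau)$ produces the extra terms $2c\tau^3L+c^2\tau^2$. Collecting everything, the coefficient of $\tau^3L$ vanishes because the $12/\pi i$ coming from $q\,\frac{dL}{dq}$ equals $2c$, and the coefficient of $\tau^2$ vanishes because $6c/\pi i$ equals $c^2$; what survives is precisely $\tau^4\Phi(\tau)$, so $\Phi(-1/\tau)=\tau^4\Phi(\tau)$ and $\Phi$ is modular of weight~$4$. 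I expect this cancellation to be the main obstacle, not because it is deep but because it hinges on getting the anomaly constant $c=6/\pi i$ exactly right and on the precise factor $12$ in~(\ref{RSode}); any other normalisation destroys the argument.

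Finally, $\Phi$ is a holomorphic weight-$4$ modular form vanishing at the cusp, so $\nu_\infty(\Phi)\geq 1$; but the valence formula assigns any nonzero weight-$4$ form total vanishing order $4/12=1/3<1$, forcing $\Phi\equiv 0$. Equivalently, the space of weight-$4$ cusp forms for ${\mathrm{SL}}(2,{\mathbb{Z}})$ is zero. As an alternative staying at the level of formal power series, one could instead match coefficients of $q^n$, reducing~(\ref{RSode}) to the convolution identity $\sum_{k=1}^{n-1}\sigma(k)\sigma(n-k)=\bigl(5\sigma_3(n)+(1-6n)\sigma(n)\bigr)/12$; this is exactly the arithmetic identity treated by Ramanujan, van~der~Pol, and Skoruppa in the cited references.
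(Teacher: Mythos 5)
Your proposal is essentially correct as a standalone argument, but it takes a genuinely different route from the paper, because the paper does not prove this theorem at all: it quotes it, citing Ramanujan's Lambert-series manipulations, van der Pol's elucidation via the product and transformation formulae for the theta function, and Skoruppa's direct combinatorial proof. Your fallback option of matching coefficients of $q^n$, reducing (\ref{RSode}) to the convolution identity $\sum_{k=1}^{n-1}\sigma(k)\sigma(n-k)=\bigl(5\sigma_3(n)+(1-6n)\sigma(n)\bigr)/12$, is exactly the arithmetic content of those cited proofs (and your normalisation is right). Your main branch is the classical modular-forms proof: I checked the anomaly cancellation, and it works as you say --- differentiating $L(-1/\tau)=\tau^2L(\tau)+c\tau$ with $c=6/\pi i$ gives $L'(-1/\tau)=\tau^4L'+2\tau^3L+c\tau^2$, the terms $2c\tau^3L$ and $c^2\tau^2$ cancel against those from $-L(-1/\tau)^2$, and $\Phi(-1/\tau)=\tau^4\Phi(\tau)$ follows; since $\Phi$ is a power series in $q$ with constant term $0-1+1=0$, it is a weight-$4$ cusp form for $\mathrm{SL}(2,\mathbb{Z})$, hence zero by the valence formula or $\dim S_4=0$.

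One caution, however, about where this sits relative to the paper's architecture. Your one nonformal input, the quasimodular law $L(-1/\tau)=\tau^2L(\tau)+6\tau/\pi i$, is precisely what the paper later \emph{deduces from} (\ref{RSode}) in Theorem~\ref{transformation_of_L}, via the projectively invariant linear equation $y''+\frac{\pi^2}{36}My=0$ and the two-dimensionality of its solution space. So within this paper your proof would be circular unless the independent proof you gesture at is genuinely supplied. Hecke's convergence-factor trick does this honestly, but it is exactly the sort of ``tricky analysis'' the paper is designed to avoid; the logarithmic-derivative-of-$\Delta$ route is not free either, since the transformation law $\Delta(-1/\tau)=\tau^{12}\Delta(\tau)$ for the eta product is usually proved via Poisson summation, via the $E_2$ law itself, or via a dimension count of the kind you invoke. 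In short: your route buys a self-contained complex-analytic proof at the cost of one hard classical input, whereas the paper buys the transformation theory of $L$ as \emph{output} of (\ref{RSode}) --- taken from the arithmetic literature --- with the direction of implication opposite to yours.
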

As usual, by setting $q=e^{2\pi i\tau}$, we may view
$L$ as a holomorphic function $L(\tau)$ for $\tau\in{\mathcal{H}}$. A change of 
variables gives
\begin{equation}\label{RSbis}\frac6{\pi i}\frac{dL}{d\tau}-L^2+M=0,
\end{equation}
an equivalent statement to (\ref{RSode}). Locally, we may write
\begin{equation}\label{potential}
L(\tau)=-\frac6{\pi i}\frac{g^\prime(\tau)}{g(\tau)}\end{equation}
and (\ref{RSbis}) becomes $g^{\prime\prime}+\frac{\pi^2}{36}Mg=0$.
Thus, we are led to consider
\begin{equation}\label{RSlinear}
y^{\prime\prime}+\frac{\pi^2}{36}My=0\end{equation} for
$y:{\mathcal{H}}\to{\mathbb{C}}$ a holomorphic function and (\ref{RSode}) says
that $y(\tau)=g(\tau)$ is a solution of (\ref{RSlinear}). We may investigate
the solutions of the {\em linear\/} equation (\ref{RSlinear}) quite explicitly.
Firstly, we may figure out much more about $g(\tau)$ as follows.

\begin{lemma}\label{g} We may take
$$\begin{array}{rcl}g(\tau)
&=&\displaystyle 
e^{-\pi i\tau/6}\exp\Big(2\sum_{n=1}^\infty\frac{\sigma(n)}{n}q^n\Big)\\[14pt]
&=&e^{-\pi i\tau/6}\big(1+2q+5q^2+10q^3+20q^4+36q^5+65q^6+\cdots\big),
\end{array}$$
a globally defined holomorphic function 
${\mathcal{H}}\to{\mathbb{C}}\setminus\{0\}$. 
\end{lemma}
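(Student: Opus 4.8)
The plan is to integrate the defining relation~(\ref{potential}) directly. Rewriting it as
$$\frac{d}{d\tau}\log g(\tau) = -\frac{\pi i}{6}L(\tau),$$
I first observe that the right-hand side is holomorphic on all of $\mathcal{H}$, since $L$ is given by the power series~(\ref{L}) in $q=e^{2\pi i\tau}$, convergent for $|q|<1$ and hence for every $\tau\in\mathcal{H}$. Because $\mathcal{H}$ is simply connected, the corresponding holomorphic $1$-form has a global primitive, unique up to an additive constant, and exponentiating produces a function $g\colon\mathcal{H}\to\mathbb{C}$ that is holomorphic and \emph{automatically} nowhere zero, being the exponential of a holomorphic function. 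Thus $g$ exists globally as a map $\mathcal{H}\to\mathbb{C}\setminus\{0\}$, determined up to a nonzero multiplicative constant; the content of the lemma is to pin down one explicit choice. This also upgrades the merely local relation~(\ref{potential}) to a global one.

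To produce the explicit formula I would integrate term by term against the $q$-expansion $L=1-24\sum_{n=1}^\infty\sigma(n)q^n$. The constant term $1$ integrates to $\tau$, and this is precisely the source of the prefactor: the summand $-\tfrac{\pi i}{6}\tau$ of $\log g$ exponentiates to $e^{-\pi i\tau/6}$. For the remaining terms, $q^n=e^{2\pi in\tau}$ gives $\int q^n\,d\tau = q^n/(2\pi in)$, so $-24\sigma(n)q^n$ contributes $-\tfrac{\pi i}{6}\cdot(-24)\cdot\tfrac{\sigma(n)}{2\pi in}q^n = 2\tfrac{\sigma(n)}{n}q^n$ to $\log g$. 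Collecting,
$$\log g(\tau) = -\frac{\pi i\tau}{6}+2\sum_{n=1}^\infty\frac{\sigma(n)}{n}q^n + C,$$
and taking $C=0$ (equivalently, normalising the constant coefficient of the $q$-series factor to be $1$) yields the first displayed formula. The termwise integration is legitimate because $\sigma(n)/n=O(n)$, so $\sum\tfrac{\sigma(n)}{n}q^n$ has radius of convergence $1$ and converges locally uniformly on $\{|q|<1\}$; its exponential is therefore holomorphic and nonvanishing, reconfirming the global claim, and the second ($q$-expansion) form $1+2q+5q^2+\cdots$ follows by a routine formal exponentiation of the series that I would not grind through.

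I expect the main obstacle to be bookkeeping rather than anything deep: keeping the constants straight through the integration, and—more conceptually—recognising that it is exactly the constant term of $L$ that forces the non-$q$-periodic prefactor $e^{-\pi i\tau/6}$, so that $g$ is a genuine function of $\tau$ and not of $q$ alone. One could alternatively identify $g=\eta^{-2}$ with Dedekind's eta function, whose product formula makes the nonvanishing manifest; but since that machinery is not developed in the present account, the exponential-of-a-primitive argument above is preferable, being entirely self-contained.
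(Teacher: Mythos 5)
Your proof is correct and is essentially the paper's argument run in the opposite direction: the paper posits the Ansatz $g(\tau)=e^{-\pi i\tau/6}\psi(q)$ and solves the resulting ODE for $\log\psi$ in the $q$-variable, whereas you integrate $-\tfrac{\pi i}{6}L$ directly in $\tau$ (using simple connectivity of $\mathcal{H}$ to get a global primitive, with nonvanishing automatic from the exponential) and let the prefactor $e^{-\pi i\tau/6}$ emerge from the constant term of $L$. Both versions reduce to the same termwise integration of $2\sum_{n\geq 1}\sigma(n)q^{n-1}$, the same normalisation of the integration constant, and the same observation that the resulting series converges on $\{|q|<1\}$, so this is the same proof in slightly different packaging.
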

\begin{proof}
Of course, the function $g(\tau)$ is locally defined by (\ref{potential}) up to
a constant. As a global Ansatz, let us try
$$g(\tau)=e^{-\pi i\tau/6}\psi(q),\quad\mbox{for}\enskip q=e^{2\pi i\tau}$$
and $\psi:\{|q|<1\}\to{\mathbb{C}}\setminus\{0\}$ holomorphic. Substituting
this form of $g$ into (\ref{potential}) gives
\begin{equation}\label{psiODE}
\psi-12q\frac{d\psi}{dq}=L\psi=\psi-24\psi\sum_{n=1}^\infty\sigma(n)q^n
\end{equation}
so 
$$\frac{d}{dq}\log\psi=\frac1\psi\frac{d\psi}{dq}
=2\sum_{n=1}^\infty\sigma(n)q^{n-1}
=2\frac{d}{dq}\sum_{n=1}^\infty\frac{\sigma(n)}nq^n$$
and, normalising $\psi(q)$ by $\psi(0)=1$, conclude that
$$\log\psi=2\sum_{n=1}^\infty\frac{\sigma(n)}nq^n.$$
Evidently, this power series converges for $|q|<1$ and we are done.
\end{proof}
As an aside, we note that the resulting power series expansion
$$\psi(q)=\sum_{n=0}^\infty b_nq^n=1+2q+5q^2+10q^3+20q^4+36q^5+65q^6+\cdots,$$
where, as one obtains easily from~(\ref{psiODE}), 
\begin{equation}\label{b-recursion}
b_0=1,\quad b_n=\frac2n\sum_{k=1}^n\sigma(k)b_{n-k},\enskip\mbox{for} \enskip
n\geq1,\end{equation}
has integer coefficients. Indeed, the generating function of $\sigma$ is the
$q$-expansion of a Lambert series 
$$\sum_{n=1}^{\infty}\sigma(n)q^n=\sum_{n=1}^{\infty}\frac{nq^n}{1-q^n},$$
which, upon rewriting, assumes the form
$$\sum_{n=1}^{\infty}\frac{nq^n}{1-q^n}
=q\frac{d}{dq}\sum_{n=1}^{\infty}\log\left(\frac{1}{1-q^n}\right)
=q\frac{d}{dq}\log\prod_{k=1}^{\infty}\frac{1}{1-q^k}.$$
But the $q$-expansion of this infinite product is well-known. It is the
generating function of the manifestly integral partition numbers~$p(k)$:
$$\prod_{k=1}^{\infty}\frac{1}{1-q^k}=\sum_{k=0}^{\infty}p(k)q^k\equiv P(q).$$
Returning to (\ref{psiODE}), we find that $\psi$ satisfies 
$$\frac{d}{dq}\left(\log\psi(q)-2\log P(q)\right)=0,$$
and, recalling that $P(0)=1$, we find that~$\psi=P^2$.

Let ${\mathbb{S}}$ denote the solution space of~(\ref{RSlinear}). As
${\mathcal{H}}$ is simply-connected, we conclude that ${\mathbb{S}}$ is
two-dimensional and in Lemma~\ref{g} we have already found one non-zero element 
in~${\mathbb{S}}$. To complete our understanding of ${\mathbb{S}}$ it suffices 
to find another linearly independent element:
\begin{lemma}\label{h}
There is a convergent power series
$$\textstyle\phi(q)
=1+\frac{10}{7}q+\frac{365}{91}q^2+\frac{13610}{1729}q^3+\frac{135701}{8645}q^4
+\frac{7419742}{267995}q^5+\cdots\quad\mbox{for}\enskip|q|<1$$
so that $h(\tau)\equiv e^{\pi i \tau/6}\phi(q)$ is in\/~${\mathbb{S}}$.
\end{lemma}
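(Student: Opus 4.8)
The plan is to construct the second solution by reduction of order from the solution $g$ already found in Lemma~\ref{g}. Because equation~(\ref{RSlinear}) has no first-order term, the Wronskian $W=gy'-g'y$ of any two solutions is constant (Abel's identity). Since $g:\mathcal{H}\to\mathbb{C}\setminus\{0\}$ is nowhere zero, I would fix $W$ to be a convenient nonzero constant and solve the resulting first-order equation $(y/g)'=W/g^2$, obtaining
$$h(\tau)=g(\tau)\int_{i\infty}^{\tau}\frac{d\sigma}{g(\sigma)^2}$$
up to an overall constant. Any such $h$ has nonzero Wronskian with $g$ and so, together with $g$, spans the two-dimensional space $\mathbb{S}$; the real task is to show that this $h$ has the asserted shape $e^{\pi i\tau/6}\phi(q)$ with $\phi$ holomorphic on $\{|q|<1\}$.

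Next I would rewrite the integrand in the coordinate $q=e^{2\pi i\tau}$. Writing $g=e^{-\pi i\tau/6}\psi(q)=q^{-1/12}\psi(q)$ and using $d\tau=dq/(2\pi i q)$ gives $g^{-2}\,d\tau=\tfrac{1}{2\pi i}\,\psi(q)^{-2}q^{-5/6}\,dq$. Since $\psi$ is holomorphic and nowhere zero on the disc with $\psi(0)=1$, the function $\psi^{-2}=\sum_{n\ge0}a_nq^n$ is holomorphic there with $a_0=1$, so $g^{-2}\,d\tau=\tfrac{1}{2\pi i}\sum_{n\ge0}a_nq^{n-5/6}\,dq$. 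The decisive observation is that the exponent $n-5/6$ never equals $-1$ for $n\ge0$, so term-by-term integration produces no logarithm and
$$\int_{i\infty}^{\tau}\frac{d\sigma}{g(\sigma)^2}=\frac{3}{\pi i}\,q^{1/6}\sum_{n\ge0}\frac{a_n}{6n+1}\,q^n,$$
the lower limit $i\infty$ (that is, $q\to0$) being legitimate because every exponent $n-5/6>-1$ keeps the integrand integrable at $q=0$; this choice also kills the constant of integration that would otherwise contaminate $h$ with a multiple of the exponent-$(-\tfrac1{12})$ function $g$. Conceptually this is just the statement that $q=0$ is a regular singular point of~(\ref{RSlinear}) with indicial exponents $\pm\tfrac{1}{12}$ differing by $\tfrac16\notin\mathbb{Z}$, so a second Frobenius solution free of logarithms exists, $g$ being the exponent-$(-\tfrac1{12})$ solution and $h$ the exponent-$(+\tfrac1{12})$ one.

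Multiplying through yields $h=q^{-1/12}\psi(q)\cdot\tfrac{3}{\pi i}q^{1/6}\sum\frac{a_n}{6n+1}q^n=\tfrac{3}{\pi i}\,q^{1/12}\phi(q)$, where $q^{1/12}=e^{\pi i\tau/6}$ and $\phi(q)\equiv\psi(q)\sum_{n\ge0}\frac{a_n}{6n+1}q^n$ after rescaling $h$ by the harmless constant $\pi i/3$. This is exactly the claimed form, and $\phi(0)=\psi(0)\,a_0=1$. For convergence I note that $\sum a_nq^n=\psi^{-2}$ converges on $\{|q|<1\}$; dividing its coefficients by $6n+1$ cannot shrink the radius of convergence, and multiplying the resulting holomorphic function by the holomorphic $\psi$ keeps the product holomorphic, so $\phi$ converges for $|q|<1$. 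As a check one computes from $\psi=1+2q+\cdots$ that $\psi^{-2}=1-4q+\cdots$, whence $a_0=1,\ a_1=-4$, so $\sum\frac{a_n}{6n+1}q^n=1-\tfrac47q+\cdots$ and $\phi=(1+2q+\cdots)(1-\tfrac47q+\cdots)=1+\tfrac{10}{7}q+\cdots$, matching the stated expansion. The main obstacle, and essentially the only delicate point, is the logarithm-free integration together with the correct choice of the constant of integration; both are controlled by the single arithmetic fact that $6n+1\neq0$, equivalently that the indicial exponents differ by a non-integer.
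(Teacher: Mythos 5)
Your argument is correct, but it takes a genuinely different route from the paper's. The paper proceeds by a direct Frobenius-type Ansatz: substituting $y=e^{\pi i\tau/6}\phi(q)$ into (\ref{RSlinear}) reduces it to $6q^2\phi''+7q\phi'=10\big(\sum_{n\geq1}\sigma_3(n)q^n\big)\phi$, whence the recursion $a_n=\frac{10}{n(6n+1)}\sum_{k=1}^n\sigma_3(k)a_{n-k}$; convergence is then settled not analytically but by comparison, since the same substitution with $e^{-\pi i\tau/6}\psi(q)$ gives $b_n=\frac{10}{n(6n-1)}\sum_{k=1}^n\sigma_3(k)b_{n-k}$, and induction yields $0<a_n\leq b_n$ while $\sum b_nq^n=\psi$ is already known to converge on $\{|q|<1\}$ from the closed form in Lemma~\ref{g}. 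You instead construct $h$ by reduction of order, $h=g\int_{i\infty}^\tau g^{-2}\,d\sigma$, exploiting the constant Wronskian and the non-vanishing of $g$, and you obtain $\phi=\psi\cdot\sum_{n\geq0}\frac{a_n}{6n+1}q^n$ in closed form, with convergence inherited from the holomorphy of $\psi^{-2}$ on the disc (legitimate, since $\psi=P^2$ is non-vanishing there). Both proofs pivot on the same arithmetic fact, that the indicial exponents $\pm\frac1{12}$ at $q=0$ differ by $\frac16\notin{\mathbb{Z}}$: in your version this is the logarithm-free integrability $6n+1\neq0$, in the paper's it is the non-vanishing denominator $n(6n+1)$. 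Your route buys a closed formula for $\phi$ and, for free, the linear independence of $h$ from $g$ (nonzero Wronskian), which the paper only extracts later from the distinct $T$-eigenvalues; the paper's route is more elementary, avoiding the improper integral at $i\infty$, fractional powers of $q$, and the interchange of sum and integral that your termwise integration requires (all routine here, since the exponents $n-\frac56$ exceed $-1$ and the series converges uniformly on compacta, but worth a sentence). Your numerical check is right and extends one step further: $\psi^{-2}=1-4q+2q^2-\cdots$ gives the $q^2$ coefficient of $\phi$ as $5-\frac{8}{7}+\frac{2}{13}=\frac{365}{91}$, matching the lemma.
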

\begin{proof} We try $y(\tau)=e^{\pi i \tau/6}\phi(q)$ as an Ansatz 
in~(\ref{RSlinear}). A calculation shows that (\ref{RSlinear}) reduces to
$$6q^2\frac{d^2\phi}{dq^2}+7q\frac{d\phi}{dq}
=10\sum_{n=1}^\infty\sigma_3(n)q^n,$$
whereas substituting $h(\tau)=e^{-\pi i\tau/6}\psi(q)$ instead, gives
$$6q^2\frac{d^2\psi}{dq^2}+5q\frac{d\psi}{dq}
=10\sum_{n=1}^\infty\sigma_3(n)q^n.$$
Each of these gives a recursion relation for the coefficients of a formal 
power series for the function in question, namely 
$$\phi(q)=\sum_{n=0}^\infty a_nq^n\qquad\psi(q)=\sum_{n=0}^\infty b_nq^n$$
where $a_0=b_0=1$ and, for $n\geq 1$,
$$a_n=\frac{10}{n(6n+1)}\sum_{k=1}^n\sigma_3(k)a_{n-k}\qquad
b_n=\frac{10}{n(6n-1)}\sum_{k=1}^n\sigma_3(k)b_{n-k}.$$
By Lemma~\ref{g}, we know that the power series $\sum_{n=0}^\infty b_nq^n$
converges for $|q|<1$ (and, from this formal point of view, the content of
(\ref{RSode}) is that the recursion relation (\ref{b-recursion}) yields the
same coefficients~$b_n$). {From} these recurrence relations it is clear, by
induction, that $0<a_n\leq b_n$. It follows that $\sum_{n=0}^\infty a_nq^n$
also converges for $|q|<1$ and we are done.\end{proof} 
In summary,
Lemmata~\ref{g} and~\ref{h} give us a basis for ${\mathbb{S}}$ of the form
$$\begin{array}{rcr}
g(\tau)&=&e^{-\pi i\tau/6}\psi(q)\phantom{,}\\[4pt]
h(\tau)&=&e^{\pi i\tau/6}\phi(q),
\end{array}\quad\mbox{where}\enskip q=e^{2\pi i\tau}$$
and $\phi(q),\psi(q)$ are holomorphic functions on the unit disc $\{|q|<1\}$.
Also notice that both $\psi(e^{2\pi i\tau})$ and $\phi(e^{2\pi i\tau})$ are
strictly positive along the imaginary axis~$\{\tau=it|t>0\}$
in~${\mathcal{H}}$. In particular, we conclude that $h(i)\not=0$.

\begin{thm}\label{firstBGG} 
The equation \eqref{RSlinear} is projectively invariant.
\end{thm}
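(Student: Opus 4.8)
The plan is to read \eqref{RSlinear} as a second-order operator in normal form (no first-derivative term) and to exploit the classical fact that such operators transform covariantly under M\"obius changes of the independent variable, the sole obstruction to strict invariance being a Schwarzian derivative that vanishes precisely for M\"obius maps. Accordingly, I would regard the unknown $y$ as a $(-\tfrac12)$-density, that is, as the coefficient of $y\,(d\tau)^{-1/2}$, and the potential $\tfrac{\pi^2}{36}M$ as the coefficient of the quadratic differential $\tfrac{\pi^2}{36}M\,(d\tau)^2$. The assertion of the theorem is then that $y\mapsto y''+\tfrac{\pi^2}{36}My$ is an operator intrinsic to the projective structure on $\mathcal H$ (hence on $\Sigma$), carrying $(-\tfrac12)$-densities to $(\tfrac32)$-densities; in the language suggested by the label, it is the first BGG operator of that projective structure.

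First I would establish the transformation law. Given a M\"obius map $\gamma(\tau)=\frac{a\tau+b}{c\tau+d}$ with $ad-bc=1$, and a solution $y$ of $y''+Qy=0$ in the variable $\gamma(\tau)$, set $z(\tau)\equiv(\gamma'(\tau))^{-1/2}y(\gamma(\tau))$. The weight $-\tfrac12$ is chosen exactly so that the first-derivative term cancels: differentiating twice and substituting $y''=-Qy$ yields $z''+\widetilde Q\,z=0$ with
$$\widetilde Q(\tau)=(\gamma'(\tau))^2\,Q(\gamma(\tau))+\tfrac12\{\gamma;\tau\},$$
where $\{\gamma;\tau\}$ is the Schwarzian derivative. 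The decisive observation is that $\{\gamma;\tau\}=0$ for every M\"obius $\gamma$, so that $\widetilde Q=(\gamma')^2\,(Q\circ\gamma)$; thus, on the preferred (projective) charts, $Q$ transforms as a genuine quadratic differential and the operator form is chart-independent.

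It remains to see that our specific potential closes up. Since $\gamma'(\tau)=(c\tau+d)^{-2}$, the law just derived reads $\widetilde Q(\tau)=(c\tau+d)^{-4}Q(\gamma(\tau))$, so invariance of \eqref{RSlinear} amounts to $M$ being automorphic of weight $4$ for the transition maps. This is exactly \eqref{Mtransformation}, extended to all of $\mathrm{SL}(2,\mathbb Z)$ as in \eqref{G4transformation}: for such $\gamma$ one has $M(\gamma\tau)=(c\tau+d)^4M(\tau)$, and therefore
$$\widetilde Q(\tau)=(c\tau+d)^{-4}\cdot\tfrac{\pi^2}{36}\cdot(c\tau+d)^4M(\tau)=\tfrac{\pi^2}{36}M(\tau)=Q(\tau).$$
In particular the equation is preserved by the deck group $\Gamma_0(4)$, generated by $T$ and $R:\tau\mapsto\tau/(4\tau+1)$, so $\tfrac{\pi^2}{36}M\,(d\tau)^2$ descends to a (meromorphic) quadratic differential on $\Sigma$ and the operator is globally well-defined there.

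I expect the only real difficulty to be one of bookkeeping rather than substance: fixing the density weights of $y$ and of the potential consistently, and tracking the factor $\tfrac12$ in the Schwarzian term, so that the cancellation above is exact. A second point deserving care is conceptual, namely that ``projectively invariant'' must be understood relative to the projective transition functions of $\Sigma$, which lie in $\mathrm{SL}(2,\mathbb Z)$ and under which $M$ enjoys its weight-$4$ law, rather than under the full real M\"obius group, under which $M$ is not homogeneous. Once the weights are pinned down, the vanishing of the Schwarzian for M\"obius maps does all the work.
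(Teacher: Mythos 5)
Your proposal is correct and takes essentially the same approach as the paper: your $(-\tfrac12)$-density ansatz $z=(\gamma')^{-1/2}\,y\circ\gamma$ is exactly the paper's weight-one decree \eqref{weight1}, since $(c\tau+d)=(\gamma'(\tau))^{-1/2}$, and the vanishing of the Schwarzian for M\"obius maps is just a repackaging of the paper's chain-rule identity $\frac{d^2}{d\tau^2}\bigl[(c\tau+d)\,g\bigl(\frac{a\tau+b}{c\tau+d}\bigr)\bigr]=(c\tau+d)^{-3}g''\bigl(\frac{a\tau+b}{c\tau+d}\bigr)$, after which both arguments close with the same weight-four law for $M$ coming from \eqref{G4transformation} and \eqref{Mtransformation}. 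The only difference is presentational: you derive the general covariance law with its Schwarzian correction and then specialize to M\"obius maps, whereas the paper verifies the M\"obius case directly.
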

\begin{proof} Firstly, we must explain what the phrase `projectively invariant'
means. There is no local structure in the conformal geometry 
of~${\mathcal{H}}$ (an $n$-dimensional complex manifold is locally 
biholomorphic to ${\mathbb{C}}^n$; end of story). Globally, however, the group 
${\mathrm{SL}}(2,{\mathbb{R}})$ acts conformally on~${\mathcal{H}}$ and this 
may be recorded as local information on~${\mathcal{H}}$, specifically as a 
collection of preferred local co\"ordinates, namely $\tau$ and its translates
$$\frac{a\tau+b}{c\tau+d}\quad\mbox{for}
\enskip\mat{a}{b}{c}{d}\in{\mathrm{SL}}(2,{\mathbb{R}}).$$
Roughly speaking, this is a `projective structure.' In any case, to say that 
(\ref{RSlinear}) is `projectively invariant' is to say that it respects the 
action of ${\mathrm{SL}}(2,{\mathbb{Z}})$. For this to be true we decree 
that 
\begin{equation}\label{weight1}(A^{-1}g)(\tau)\equiv(c\tau+d)g(A\tau),
\quad\mbox{for}\enskip A=\mat{a}{b}{c}{d}\in{\mathrm{SL}}(2,{\mathbb{R}}).
\end{equation}
(In the language of projective differential geometry $g$ is a `projective 
density of weight~$1$.') {From} (\ref{G4transformation}), 
(\ref{expandG4}), and (\ref{M}), we already know that 
$$M\Big(\frac{a\tau+b}{c\tau+d}\Big)=(c\tau+d)^4M(\tau),
\quad\mbox{for}\enskip\mat{a}{b}{c}{d}\in{\mathrm{SL}}(2,{\mathbb{Z}})$$
and so it suffices to show that
$$\frac{d^2}{d\tau^2}\left[(c\tau+d)g\Big(\frac{a\tau+b}{c\tau+d}\Big)\right]
=\frac1{(c\tau+d)^3}\frac{d^2g}{d\tau^2}\Big(\frac{a\tau+b}{c\tau+d}\Big),$$
which is an elementary consequence of the chain rule.
\end{proof}
Recall that ${\mathbb{S}}$, the solution space of~(\ref{RSlinear}), is
two-dimensional. In accordance with Theorem~\ref{firstBGG}, the group
${\mathrm{SL}}(2,{\mathbb{Z}})$, generated by
\begin{equation}\label{TandS}T\equiv\mat{1}{1}{0}{1}\quad\mbox{and}\quad 
S\equiv\mat{0}{-1}{1}{0},\end{equation}
is represented on~${\mathbb{S}}$. More specifically, if $g(\tau)$
solves~(\ref{RSlinear}) then, according to~(\ref{weight1}), so do
$$(Tg)(\tau)\equiv g(\tau-1)\quad\mbox{and}\quad
(Sg)(\tau)\equiv -\tau g(-1/\tau).$$

\begin{thm}\label{transformation_of_L}
The holomorphic function $L:{\mathcal{H}}\to{\mathbb{C}}$ satisfies
\begin{equation}\label{quasimodular}
L\Big(\frac{a\tau+b}{c\tau+d}\Big)=(c\tau+d)^2L(\tau)+\frac6{\pi i}c(c\tau+d)
\end{equation}
for $\mat{a}{b}{c}{d}\in{\mathrm{SL}}(2,{\mathbb{Z}})$. 
\end{thm}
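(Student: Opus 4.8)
The plan is to verify the identity on the two generators $T=\mat{1}{1}{0}{1}$ and $S=\mat{0}{-1}{1}{0}$ of ${\mathrm{SL}}(2,{\mathbb{Z}})$ and then to extend it to the whole group formally. For the extension, write $j(A,\tau)\equiv c\tau+d$ and use the weight-$2$ slash $(F\,|\,A)(\tau)\equiv j(A,\tau)^{-2}F(A\tau)$, a right action satisfying $(F\,|\,A)\,|\,B=F\,|\,(AB)$. In this language $(\ref{quasimodular})$ says exactly $L\,|\,A=L+\xi_A$, where $\xi_A(\tau)\equiv\frac6{\pi i}\,\frac{d}{d\tau}\log j(A,\tau)=\frac6{\pi i}\,\frac{c}{c\tau+d}$; since $A\mapsto\xi_A$ is the associated $1$-cocycle (so $\xi_{AB}=\xi_B+\xi_A\,|\,B$, a one-line chain-rule check), the set of $A$ obeying $(\ref{quasimodular})$ is a subgroup. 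For $T$ the identity reduces to $L(\tau+1)=L(\tau)$, immediate from the $q$-expansion $(\ref{L})$, so the whole theorem rests on the case of $S$.

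For $S$ the assertion is $L(-1/\tau)=\tau^2L(\tau)+\frac6{\pi i}\tau$, and here I would use $L=-\frac6{\pi i}\,g'/g$ with $g\in{\mathbb{S}}$ from $(\ref{potential})$. By projective invariance (Theorem~\ref{firstBGG}) the function $\tilde g(\tau)\equiv\tau\,g(-1/\tau)$ is again a solution of $(\ref{RSlinear})$, and differentiating $\log\tilde g$ --- using $\det S=1$, so that the derivative of $-1/\tau$ is $1/\tau^2$ --- gives
\[
L(-1/\tau)=\tau^2\Bigl(-\tfrac6{\pi i}\,\tfrac{\tilde g'}{\tilde g}\Bigr)(\tau)+\tfrac6{\pi i}\,\tau .
\]
Thus the $S$-transformation holds if and only if $\tilde g$ is a constant multiple of $g$, i.e.\ the line $\langle g\rangle\subset{\mathbb{S}}$ is $S$-invariant; granting this, $-\frac6{\pi i}\tilde g'/\tilde g=L$ and we are done.

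To decide $S$-invariance of $\langle g\rangle$ I would use the Wronskian. As $(\ref{RSlinear})$ has no first-order term, $W(\tilde g,g)\equiv\tilde g\,g'-\tilde g'g$ is constant, so it may be read off at the single $S$-fixed point $\tau=i$. A short computation gives $W(\tilde g,g)(i)=2i\,g(i)g'(i)-g(i)^2$, which vanishes precisely when $g'(i)/g(i)=1/(2i)$, that is, precisely when $L(i)=3/\pi$. The whole proposition therefore collapses to this one special value.

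I expect $L(i)=3/\pi$ to be the real obstacle: being equivalent to the $S$-transformation at the fixed point, it cannot be bootstrapped from $(\ref{quasimodular})$ without circularity and genuinely needs analytic input. The natural and already-available source is the inversion law for $g$ under $\tau\mapsto-1/\tau$: recognising $g=e^{-\pi i\tau/6}\psi(q)$ as $\eta^{-2}$ (Dedekind's eta), the Poisson-summation mechanism behind Theorem~\ref{miracle} yields $g(-1/\tau)=(i/\tau)\,g(\tau)$, whence $\tilde g=ig$ and $W(\tilde g,g)\equiv0$ outright. A point worth stressing in the write-up is that it is $g$, not the companion solution $h$ of Lemma~\ref{h}, that spans the invariant line: the two $T$-eigenvalues $e^{\mp\pi i/6}$ on ${\mathbb{S}}$ are distinct, yet $S$ and $T$ cannot be simultaneously diagonalised, so ${\mathbb{S}}$ is reducible but indecomposable with $\langle g\rangle$ its unique invariant line --- reflecting that $g^{12}=\Delta^{-1}$ is genuinely modular while $h$ is not.
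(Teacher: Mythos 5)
Your reduction is sound, and up to a point it runs parallel to the paper's own proof: checking the cocycle property so that generators suffice, disposing of $T$ via the $q$-expansion, and recognising that the $S$-case is exactly the statement that the line $\langle g\rangle\subset{\mathbb{S}}$ is invariant under the weight-$1$ action of~$S$. Your Wronskian computation is also correct: since \eqref{RSlinear} has no first-order term, $W(\tilde g,g)$ is constant, and its value at the fixed point $\tau=i$ shows the whole theorem is equivalent to the single evaluation $L(i)=3/\pi$ --- a genuinely nice observation. But precisely here the proposal has a real gap. You discharge this last obligation onto the inversion law $g(-1/\tau)=(i/\tau)g(\tau)$, i.e.\ Dedekind's $\eta(-1/\tau)=\sqrt{-i\tau}\,\eta(\tau)$, asserting that it comes from ``the Poisson-summation mechanism behind Theorem~\ref{miracle}.'' It does not, at least not in one line: Theorem~\ref{miracle} is Gaussian Poisson summation applied to $\theta$, whereas the $\eta$-inversion requires first expressing $\eta$ as a theta series (Euler's pentagonal number theorem or the Jacobi triple product) and then a character-twisted Poisson summation, none of which is available in the paper or supplied by you. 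So the decisive step of your proof is asserted rather than proved.

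Moreover, your claim that $L(i)=3/\pi$ ``cannot be bootstrapped \ldots without circularity and genuinely needs analytic input'' is exactly what the paper refutes, and this is its main twist. The paper closes the gap purely algebraically: assuming $f\equiv Sg$ is not proportional to $g$, the pair $(f,g)$ is a basis of ${\mathbb{S}}$ on which $S$ acts by $\mat{0}{-1}{1}{0}$; since $T$ acts diagonalisably with distinct eigenvalues $e^{\mp\pi i/6}$ (Lemmata~\ref{g} and~\ref{h}), its matrix is triangular with an off-diagonal entry $\alpha$, and the relation $(ST)^3=-{\mathrm{Id}}$ forces $\alpha=1$; then $f+ig$ is a $T$-eigenvector with eigenvalue $e^{-\pi i/6}$, hence a constant multiple of $h$; but evaluating $f=Sg$ at the $S$-fixed point gives $f(i)+ig(i)=0$ while $h(i)\not=0$, so $f+ig\equiv0$, contradicting independence. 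Thus $Sg=\beta g$ with no appeal to $\eta$ and no new Poisson summation --- the only analytic inputs remain the Eisenstein expansion and Ramanujan's ODE, already in place. Your closing remark that ${\mathbb{S}}$ is indecomposable with $\langle g\rangle$ its unique invariant line is likewise unsubstantiated (once $Sg=-ig$ is known, the relations $S^2=(ST)^3=-{\mathrm{Id}}$ alone are consistent with a simultaneous diagonalisation, so indecomposability needs a separate computation), though nothing in your main chain depends on it. The honest repair is to replace the appeal to $\eta$-inversion by the paper's representation-theoretic argument, at which point your Wronskian step, pleasant as it is, becomes redundant.
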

\begin{proof} 
It suffices to prove (\ref{quasimodular}) for the generators $T$ and $S$ of
${\mathrm{SL}}(2,{\mathbb{Z}})$, specifically that
$$L(\tau+1)=L(\tau)\quad\mbox{and}
\quad L(-1/\tau)=\tau^2L(\tau)+6\tau/\pi i.$$
The first of these holds by Lemma~\ref{g}, which implies that 
$Tg=e^{\pi i/6}g$. To establish the second identity, it suffices to show that
$Sg=\beta g$ for some constant $\beta$: if $-\tau g(-1/\tau)=\beta g(\tau)$,
then
$$\beta g(-1/\tau)=g(\tau)/\tau\enskip\Rightarrow\enskip
\beta g^\prime(-1/\tau)=\tau g^\prime(\tau)-g(\tau)$$
so
$$\frac{\beta g^\prime(-1/\tau)}{g(\tau)}
=\frac{\tau g^\prime(\tau)}{g(\tau)}-1.$$
Therefore
$$\frac{g^\prime(-1/\tau)}{\tau g(-1/\tau)}
=\frac{\tau g^\prime(\tau)}{g(\tau)}-1$$
and so
$$-\frac{6}{\pi i}\frac{g^\prime(-1/\tau)}{\tau g(-1/\tau)}
=-\frac{6}{\pi i}\frac{\tau g^\prime(\tau)}{g(\tau)}+\frac{6}{\pi i};$$
in other words, from (\ref{potential}),
$$\frac{L(-1/\tau)}\tau=\tau L(\tau)+\frac6{\pi i},$$
as required. To finish the proof, let us consider the action of
${\mathrm{SL}}(2,{\mathbb{Z}})$ on~${\mathbb{S}}$. If $Sg\not=\beta g$, then 
we may set $f\equiv Sg$ to obtain $\{f,g\}$ as a basis of~${\mathbb{S}}$. By 
construction
$$S\vect{f}{g}=\mat{0}{-1}{1}{0}\vect{f}{g}.$$
By Lemma~\ref{g}, we already know that $Tg=e^{\pi i/6}g$ and, from 
Lemma~\ref{h}, we know that the action of $T$ on ${\mathbb{S}}$ is 
diagonalisable with the other eigenvalue being~$e^{-\pi i/6}$. In other words
$$T\vect{f}{g}=\mat{e^{-\pi i/6}}{\alpha}{0}{e^{\pi i/6}}\vect{f}{g}$$
for some constant~$\alpha$. In
${\mathrm{SL}}(2,{\mathbb{Z}})$, the matrices (\ref{TandS}) satisfy the 
relations
$$S^2=-{\mathrm{Id}}\quad\mbox{and}\quad(ST)^3=-{\mathrm{Id}}.$$
These same relations must hold for their action on ${\mathbb{S}}$. For $S$
this is evident and for $T$ we conclude that $\alpha=1$. Therefore, since
$$1+ie^{\pi i/6}=ie^{-\pi i/6}$$
we find that
$$T(f+ig)=Tf+iTg=e^{-\pi i/6}f+g+ie^{\pi i/6}g
=e^{-\pi i/6}(f+ig).$$
However, in Lemma~\ref{h}, we already found in $h$ an eigenvector for the
action of $T$ on~${\mathbb{S}}$ with eigenvalue~$e^{-\pi i/6}$. It follows that
\begin{equation}\label{multiple_of_h}f(\tau)+ig(\tau)=Ch(\tau)\end{equation}
for some constant~$C$. We have already observed that $h(i)\not=0$ whereas,
substituting $\tau=i$ into $f=Sg$, we find that
$$\big[f(\tau)=-\tau g(-1/\tau)\big]|_{\tau=i}\enskip\Rightarrow\enskip
f(i)=-ig(i)\enskip\Rightarrow\enskip\big[f+ig]|_{\tau=i}=0.$$
Therefore, the only option in (\ref{multiple_of_h}) is that~$C=0$ and 
so $f+ig\equiv 0$.
Hence, assuming that $Sg\not=\beta g$ we have found that $Sg=-ig$. This 
contradiction finishes the proof.
\end{proof}
\begin{cor}\label{Xi} The holomorphic $1$-form
$$\big(L(\tau)-L(\tau+1/2)\big)d\tau$$
is $\Gamma_0(4)$-invariant.
\end{cor}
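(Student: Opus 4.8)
The plan is to reduce the asserted invariance to a weight-two transformation law for the function $\Lambda(\tau)\equiv L(\tau)-L(\tau+1/2)$. Since $d(\gamma\tau)=(c\tau+d)^{-2}\,d\tau$ for any $\gamma=\mat{a}{b}{c}{d}\in{\mathrm{SL}}(2,{\mathbb{R}})$, the $1$-form $\Lambda(\tau)\,d\tau$ is $\Gamma_0(4)$-invariant precisely when
$$\Lambda\Big(\frac{a\tau+b}{c\tau+d}\Big)=(c\tau+d)^2\,\Lambda(\tau)
\qquad\mbox{for all }\mat{a}{b}{c}{d}\in\Gamma_0(4).$$
So the whole task is to establish this identity, the point being that the anomalous linear term appearing in Theorem~\ref{transformation_of_L} must cancel upon taking the difference. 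I would verify it directly for an arbitrary $\gamma\in\Gamma_0(4)$ rather than passing to generators.

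For the summand $L(\gamma\tau)$ I would simply quote Theorem~\ref{transformation_of_L}. The summand $L(\gamma\tau+1/2)$ is the crux, and I would handle it by conjugating $\gamma$ by the half-translation $\tau\mapsto\tau+1/2$. Writing $P=\mat{1}{1/2}{0}{1}$, the identity displayed just before~(\ref{auto2}) gives $P\gamma=\gamma'P$ with
$$\gamma'=\mat{a+c/2}{b+(d-a)/2-c/4}{c}{d-c/2}.$$
For $\gamma\in\Gamma_0(4)$ one has $c\equiv0\bmod 4$, while $ad=1+bc\equiv1\bmod 4$ forces $a$ and $d$ both odd; hence every entry of $\gamma'$ is an integer and $\gamma'\in\Gamma_0(4)$. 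The virtue of this conjugation is the relation $\gamma\tau+1/2=\gamma'(\tau+1/2)$ together with the fact that the automorphy factor of $\gamma'$ at the point $\tau+1/2$ is $c(\tau+1/2)+(d-c/2)=c\tau+d$, which is exactly the automorphy factor of $\gamma$ at $\tau$.

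Applying Theorem~\ref{transformation_of_L} to $\gamma'$ at $\tau+1/2$ then yields
$$L(\gamma\tau+1/2)=(c\tau+d)^2\,L(\tau+1/2)+\frac6{\pi i}\,c(c\tau+d),$$
and subtracting this from the corresponding formula for $L(\gamma\tau)$ makes the two anomalous terms $\tfrac6{\pi i}c(c\tau+d)$ cancel identically, leaving $\Lambda(\gamma\tau)=(c\tau+d)^2\Lambda(\tau)$, as required. I expect the only genuine obstacle to lie in the bookkeeping of the previous paragraph: checking that $P\gamma P^{-1}$ stays inside $\Gamma_0(4)$ and carries the same automorphy factor. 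This is precisely where the congruence $c\equiv0\bmod 4$ defining $\Gamma_0(4)$ is indispensable, since it simultaneously keeps $\gamma'$ integral and forces the two anomalies to match; the argument would fail for a larger group such as the full modular group. As that conjugation computation is already recorded in the paper, the proof amounts to assembling these pieces rather than to any fresh calculation.
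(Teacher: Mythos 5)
Your proposal is correct, and it takes a recognisably different route from the paper's, so a comparison is worthwhile. The paper verifies invariance only on the two generators $\tau\mapsto\tau+1$ and $\tau\mapsto\tau/(4\tau+1)$ --- legitimate because its earlier fundamental-domain lemma shows these generate $\Gamma_1(4)$, and $-{\mathrm{Id}}$ acts trivially on ${\mathcal{H}}$, so checking them suffices for $\Gamma_0(4)$-invariance --- and for the second generator it performs your conjugation trick exactly once: the rewriting $L\big(\frac{\tau}{4\tau+1}+\frac12\big)=L\big(\frac{3(\tau+1/2)-1}{4(\tau+1/2)-1}\big)$ in the paper's proof is precisely your $\gamma'=P\gamma P^{-1}$ computed for $\gamma=\mat{1}{0}{4}{1}$, which indeed yields $\mat{3}{-1}{4}{-1}$. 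You instead run the cancellation for an arbitrary $\gamma\in\Gamma_0(4)$, using the identity displayed before~(\ref{auto2}) to show that $P=\mat{1}{1/2}{0}{1}$ normalises $\Gamma_0(4)$ --- your congruence bookkeeping is right: $4\mid c$ makes $c/2$ and $c/4$ integral, and $ad\equiv 1\bmod 4$ forces $a,d$ odd so $(d-a)/2$ is integral --- and that $\gamma'$ has automorphy factor $c(\tau+1/2)+(d-c/2)=c\tau+d$ at $\tau+1/2$, whence the two anomalies $\frac6{\pi i}c(c\tau+d)$ from Theorem~\ref{transformation_of_L} cancel identically in the difference. What your route buys is independence from the generator lemma: the corollary becomes a direct consequence of Theorem~\ref{transformation_of_L} plus a two-line congruence check, and it makes visible exactly where the level-$4$ condition is used (for $c\equiv 2\bmod 4$ the conjugate $\gamma'$ leaves ${\mathrm{SL}}(2,{\mathbb{Z}})$ and the argument breaks, consistent with $\Xi$ not being invariant under larger groups). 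What the paper's route buys is brevity, since the generating set was already in hand from the tiling argument. Both proofs hinge on the same key input --- the quasimodular transformation law~(\ref{quasimodular}) --- and the same mechanism of anomaly cancellation, together with $d(\gamma\tau)=(c\tau+d)^{-2}d\tau$; yours is simply the generator computation promoted to a uniform statement.
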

\begin{proof} We need only check invariance under the generators of 
$\Gamma_0(4)$:
$$\tau\mapsto\tau+1\quad\mbox{and}\quad\tau\mapsto\frac\tau{4\tau+1}.$$
The first of these is clear since $L(\tau+1)=L(\tau)$. For the second, we may 
use Theorem~\ref{transformation_of_L} immediately to conclude that
$$L\Big(\frac{\tau}{4\tau+1}\Big)
=(4\tau+1)^2L(\tau)+\frac{24}{\pi i}(4\tau+1)$$
but also that
$$\begin{array}{rcl}\displaystyle L\Big(\frac\tau{4\tau+1}+\frac12\Big)
&\!\!\!\!=\!\!\!\!&\displaystyle 
L\Big(\frac{3(\tau+1/2)-1}{4(\tau+1/2)-1}\Big)\\[10pt]
&\!\!\!\!=\!\!\!\!&\displaystyle\big(4(\tau+1/2)-1\big)^2L(\tau+1/2)
+\frac{24}{\pi i}(4(\tau+1/2)-1)\\[8pt]
&\!\!\!\!=\!\!\!\!&\displaystyle
(4\tau+1)^2L(\tau+1/2)+\frac{24}{\pi i}(4\tau+1).
\end{array}$$
Subtracting these identities gives
$$L\Big(\frac{\tau}{4\tau+1}\Big)-L\Big(\frac\tau{4\tau+1}+\frac12\Big)
=(4\tau+1)^2\Big(L\big(\tau\big)-L\big(\tau+\frac12\big)\Big).$$
But
$$d\Big(\frac\tau{4\tau+1}\Big)
=\frac{(4\tau+1)d\tau-4\tau d\tau}{(4\tau+1)^2}
=\frac1{(4\tau+1)^2}d\tau,$$
the factor of $(4\tau+1)^2$ cancels, and we are done.
\end{proof}
\begin{lemma}\label{cusp} Suppose $\xi(\tau)$ is a holomorphic function
${\mathcal{H}}\to{\mathbb{C}}$ and let $q=e^{2\pi i\tau}$. In order that
$\xi(\tau)d\tau$ extend to a meromorphic differential form on the unit disc
$\{|q|<1\}$ with at worse a simple pole at $q=0$, it
is necessary and sufficient that
\begin{itemize}
\item $\xi(\tau+1)=\xi(\tau),\;\forall\tau\in{\mathcal{H}}$,
\item $\xi(\tau)$
is bounded on the rectangle $\{\tau=x+iy\,|\,0\leq x\leq 1,y\geq 1\}$.
\end{itemize}
\end{lemma}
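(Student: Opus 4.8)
The plan is to push everything through the substitution $q=e^{2\pi i\tau}$, under which $dq=2\pi i\,q\,d\tau$, so that
$$\xi(\tau)\,d\tau=\frac{\xi(\tau)}{2\pi i}\,\frac{dq}{q}.$$
The map $\tau\mapsto q$ realises $\mathcal{H}$ as the universal cover of the punctured disc $\{0<|q|<1\}$, with deck group generated by $\tau\mapsto\tau+1$. Since both $q$ and $d\tau$ are invariant under this translation, the form $\xi(\tau)\,d\tau$ descends to the punctured disc precisely when $\xi(\tau+1)=\xi(\tau)$; this accounts for the first bullet as exactly the descent condition. Granted periodicity, $\xi$ becomes a holomorphic function of $q$ alone on $\{0<|q|<1\}$, say $\xi(\tau)=\tilde\xi(q)$, and the form reads $\frac{\tilde\xi(q)}{2\pi i\,q}\,dq$.

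Next I would observe that a meromorphic form on the full disc is $f(q)\,dq$ with $f$ meromorphic, and \emph{at worst a simple pole at $q=0$} means $f$ has at worst a simple pole there. Because of the extra factor $1/q$ contributed by $d\tau$, demanding that $\frac{\tilde\xi(q)}{2\pi i\,q}$ have at worst a simple pole at the origin is equivalent to demanding that $\tilde\xi$ itself be \emph{holomorphic} at $q=0$, i.e.\ that the singularity of $\tilde\xi$ at the origin be removable. Indeed, if $\tilde\xi(q)=a_0+a_1q+\cdots$ is holomorphic at $0$ then $\tilde\xi(q)/q=a_0/q+a_1+\cdots$ has at worst a simple pole, whereas a pole of order $k\geq1$ in $\tilde\xi$ would force a pole of order $k+1\geq2$ in the form. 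This bookkeeping --- that the one permitted simple pole is entirely consumed by $d\tau$ --- is the single place where care is required.

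Finally I would invoke Riemann's removable-singularity theorem: the holomorphic function $\tilde\xi$ on the punctured disc extends holomorphically across $q=0$ if and only if it is bounded near $q=0$. Translating back through $q=e^{2\pi i\tau}$, the half-strip $\{y\geq1\}$ has image the punctured disc $\{0<|q|\leq e^{-2\pi}\}$, and by periodicity boundedness of $\xi$ there is the same as boundedness on the one fundamental rectangle $\{0\leq x\leq1,\ y\geq1\}$. Thus if periodicity and boundedness both hold, $\tilde\xi$ extends holomorphically and the form acquires at worst a simple pole at $q=0$; conversely, if the form so extends then $\tilde\xi$ is holomorphic on all of $\{|q|<1\}$, hence continuous and so bounded on the compact disc $\{|q|\leq e^{-2\pi}\}$, which returns boundedness of $\xi$ on the rectangle. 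This closes the equivalence in both directions, and beyond the pole-counting of the middle step there is no serious obstacle.
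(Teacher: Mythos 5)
Your proof is correct and follows essentially the same route as the paper: periodicity lets $\xi$ descend to a holomorphic function of $q$ on the punctured disc, boundedness on the rectangle translates to boundedness on $\{0<|q|<e^{-2\pi}\}$ so that Riemann's removable singularities theorem applies, and the factor $dq=2\pi i\,q\,d\tau$ accounts for the permitted simple pole. The only difference is that you spell out the necessity direction explicitly, which the paper leaves implicit; that is a harmless (indeed welcome) elaboration, not a different method.
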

\begin{proof}The first condition ensures that $\xi(\tau)$ is, in fact, a
holomorphic function of~$q$ and then, since 
$q=e^{2\pi i\tau}=e^{-2\pi y}e^{2\pi ix}$ the second condition says that
$\xi(q)$ is bounded on the disc $\{|q|<e^{-2\pi}\}$ at which point 
Riemann's removable singularities theorem implies that $\xi(q)$ extends
holomorphically across the origin: $\xi(q)=a+bq+\cdots$. Therefore,
$$q=e^{2\pi i\tau}\enskip\Rightarrow\enskip dq=2\pi i qd\tau
\enskip\Rightarrow\enskip\xi(\tau)d\tau=\frac1{2\pi i}\Big(\frac{a}{q}+
b+\cdots\Big)dq,$$
as required.
\end{proof}
Now consider the holomorphic $1$-form 
$$\Xi\equiv\big(L(\tau)-L(\tau+1/2)\big)d\tau\enskip\mbox{on}\enskip
{\mathcal{H}}.$$ With 
$q=e^{2\pi i\tau}$, as usual, it follows from the definition~(\ref{L}) of $L$ 
that 
$$L(\tau)-L(\tau+1/2)
=-48\big(q+4q^3+6q^5+\cdots\big)$$
and so $\Xi=-\frac{24}{\pi i}(1+4q^2+6q^4+\cdots)dq$ and, in particular,
extends holomorphically across $q=0$. Now we ask what happens at the cusps, a 
sensible question in view of Corollary~\ref{Xi}. 

The change of co\"ordinates 
$\tau=-1/4\widetilde\tau$ sends our usual fundamental domain for 
$\Gamma_0(4)$ into itself whilst sending
$$0\mapsto \infty,\quad
1/2\mapsto -1/2,\quad
\infty\mapsto 0,\quad
-1/2\mapsto 1/2$$
(it's a half turn about $i/2$ in the hyperbolic metric on~${\mathcal{H}}$).
In order to figure out the behaviour of $\Xi$ let us firstly consider the 
holomorphic $1$-form $\xi\equiv L(\tau)d\tau$. We may view it in the 
co\"ordinate $\widetilde\tau$:
$$\xi=L(-1/4\tilde\tau)d(-1/4\widetilde\tau)
=\frac{L(-1/4\widetilde\tau)}{4\widetilde\tau^2}d\widetilde\tau$$
and employ Theorem~\ref{transformation_of_L} to conclude that
$$\xi
=\frac{16\widetilde\tau^2L(4\widetilde\tau)+24\widetilde\tau/\pi i}
{4\widetilde\tau^2}d\widetilde\tau
=\Big(4L(4\widetilde\tau)+\frac{6}{\pi i\widetilde\tau}\Big)d\widetilde\tau.$$
Of course, whilst $4L(4\widetilde{\tau})$ is periodic under $\widetilde{\tau}
\mapsto \widetilde{\tau}+1$, $6/\pi i \widetilde{\tau}$ is not. Thus, the first
stipulation of Lemma~\ref{cusp} in this case (namely, the\linebreak periodicity
of $4L(4\widetilde\tau)+6/\pi i\widetilde\tau$) is not satisfied. But on the
rectangle in the statement of Lemma~\ref{cusp}, this function is at least
bounded. Now, if we apply the same reasoning to the holomorphic $1$-form
$L(\tau+1/2)d\tau$, then the boundedness hypothesis of Lemma~\ref{cusp} is
again satisfied, and again periodicity fails. When we subtract
$L(\tau+1/2)d\tau$ from $L(\tau)d\tau$, periodicity is restored in view of
Corollary~\ref{Xi} and boundedness persists! Lemma~\ref{cusp} now applies and
we conclude that $\Xi$ has no worse than a simple pole at $z=\infty$. Similar
reasoning
applies concerning the cusp at $z=1$. With more care we could even compute the
residues at these points (but this is an {\em optional\/} extra).


To conclude, we have verified that 
$$\left(L(\tau)-L(\tau+1/2)\right)d\tau$$
and
$$\big((\theta(\tau))^4-(\theta(\tau+1/2))^4\big)d\tau$$
are meromorphic one-forms on the thrice-punctured sphere with poles and zeros
in the same locations. It follows that one is a constant multiple of the other,
and the proof of (\ref{jacobi})\footnote{The full force of the Jacobi
four-square theorem, namely that the number of ways of representing an integer
$n$ as a sum of four squares of integers is equal to 
$8\sum_{4 \nmid d \mid n}d$, follows from (\ref{jacobi}) in an elementary
fashion.} is complete upon comparing their power series expansions in $q$.

\end{document}